\newcommand{\lk}{\operatorname{lk}}
\newcommand{\Z}{\mathbb Z}
\newcommand{\R}{\mathbb R}
\newcommand{\nhd}{\operatorname{nhd}}
\newcommand{\Wh}{\operatorname{Wh}}
\newcommand{\id}{\operatorname{id}}
\newcommand{\inte}{\operatorname{int}}
\def\conn{\mathbin{\#}}
\newcommand{\sss}{{S^2\hskip-2pt\times\hskip-2pt S^2}}
\newcommand{\ytilde}{\widetilde Y}
\newcommand{\vtilde}{\widetilde V}
\newcommand{\wtilde}{\widetilde W}
\newcommand{\gtilde}{\tilde\gamma}
\newcommand{\St}{\widetilde S}
\newcommand{\ba}{{\bf a}}
\newcommand{\bb}{{\bf b}}
\renewcommand{\phi}{\varphi}
\newcommand{\spinc}{\ifmmode{\operatorname{Spin}^c}\else{$\operatorname{spin}^c$\ }\fi}
\newcommand{\tb}{\operatorname{tb}}
\newcommand{\rot}{\operatorname{rot}}
\newtheorem{theorem}{Theorem}[section]
\newtheorem*{T:introsphere}{Theorem~\ref{T:nodehnforsphere}}
\newtheorem*{T:smooth-ball}{Theorem~\ref{T:smooth-ball}}
\newtheorem*{T:nodehnfortori}{Theorem~\ref{T:nodehnfortori}}
\newtheorem*{T:torus-smooth}{Theorem~\ref{T:torus-smooth}}
\newtheorem{lemma}[theorem]{Lemma}
\newtheorem{proposition}[theorem]{Proposition}
\newtheorem{corollary}[theorem]{Corollary}
\theoremstyle{definition}
\def\acknowledgementname{Acknowledgements.}
\newenvironment{acknowledgement}
   {
    \vspace*{\baselineskip}
    \noindent{\small\textbf{\acknowledgementname}}
    \unskip\noindent}{}    
\title{Four-dimensional analogues of Dehn's lemma}
\author[Arunima Ray]{Arunima Ray}
\address{Department of Mathematics, MS 050\newline\indent Brandeis
University \newline\indent Waltham, MA 02454}
\email{aruray@brandeis.edu}
\author[Daniel Ruberman]{Daniel Ruberman${}^1$}
\address{Department of Mathematics, MS 050\newline\indent Brandeis
University \newline\indent Waltham, MA 02454}
\email{ruberman@brandeis.edu}
\date{\today}
\thanks{$^1$Partially supported by NSF Grant 1506328}
\subjclass[2000]{57M35, 57N10, 57N13}
\begin{document}
\maketitle
\begin{abstract}
We investigate certain $4$-dimensional analogues of the classical $3$-dimensional Dehn's lemma, giving examples where such analogues do or do not hold, in the smooth and topological categories. In particular, we show that an essential $2$-sphere $S$ in the boundary of a simply connected $4$-manifold $W$ such that $S$ is null-homotopic in $W$ need not extend to an embedding of a ball in $W$. However, if $W$ has abelian fundamental group with boundary a homology sphere, then $S$ bounds a topologically embedded ball in $W$. Additionally, we give examples where such an $S$ does not bound any smoothly embedded ball in $W$. In a similar vein, we construct incompressible tori $T\subseteq \partial W$ where $W$ is a contractible $4$-manifold such that $T$ extends to a map of a solid torus in $W$, but not to any embedding of a solid torus in $W$. Moreover, we construct an incompressible torus $T$ in the boundary of a contractible $4$-manifold $W$ such that $T$ extends to a topological embedding of a solid torus in $W$ but no smooth embedding. As an application of our results about tori, we address a question posed by Gompf about extending certain families of diffeomorphisms of $3$-manifolds; he has recently used such families to construct infinite order corks.  
\end{abstract}

\vspace{-.2in}

\section{Introduction}
The classical $3$-dimensional Dehn's lemma says that if an embedded circle in the boundary of a $3$-manifold is null-homotopic in the interior then it is in fact the boundary of an embedded disk. There are two possible analogues of this setup in the context of $4$-manifolds: we may ask about embedded circles in the boundary, or about embedded codimension one submanifolds of the boundary. The former is essentially a question about knot concordance (see \cite{livingston:survey,hom:survey} for excellent surveys of this active field). Here we consider the latter situation. Note that unlike the $3$-dimensional case, the distinction between the smooth and topological categories  for $4$-manifolds introduces an added element of complexity. In this paper, any topological embedding is assumed to be locally flat. 

%Given a manifold $Y^3$ that is the boundary of a compact manifold $W^4$, we consider an essential $2$-sphere $S\subseteq Y$ that is null-homotopic in $W$, and ask whether $S$ is the boundary of an embedded $3$-ball in $W$. The $4$-manifold $W$ will be either smooth or topological, as indicated below. We show that the most general $4$-dimensional analogue of Dehn's lemma for spheres does not hold, both for separating and non-separating spheres; however, the analogue does hold under broad hypotheses.
%
%
%\begin{T:introsphere} (Theorem~\ref{T:nodehnforsphere} and Corollary~\ref{C:top-ball-easy}) {\it A sphere $S$ in $Y^3=\partial W^4$ that is null-homotopic in $W$ need not bound a topologically embedded ball in $W$. More precisely, we give examples where $W$ is smooth and simply connected and $S$ separates $Y$; and when $W$ is topological and $S$ is non-separating. 
%
%However, if $Y$ is a homology sphere and $W$ is topological and has abelian fundamental group, any null-homotopic $S$ extends to a topologically embedded ball in $W$.}
%\end{T:introsphere}
%
%A more general sufficient condition is given in Theorem~\ref{T:top-ball-easy}. Moreover, we show that it is possible for a null-homotopic sphere to extend to a topologically embedded ball, but no smoothly embedded one. 

Given a manifold $Y^3$ that is the boundary of a compact manifold $W^4$, we consider an essential $2$-sphere $S\subseteq Y$ that is null-homotopic in $W$, and ask whether $S$ is the boundary of an embedded $3$-ball in $W$. The $4$-manifold $W$ will be either smooth or topological, as indicated below. We show that the most general $4$-dimensional analogue of Dehn's lemma for spheres does not hold, both for separating and non-separating spheres; however, the analogue does hold under broad hypotheses.

\begin{T:introsphere} {\it A sphere $S$ in $Y^3=\partial W^4$ that is null-homotopic in $W$ need not bound a topologically embedded ball in $W$. More precisely, there are such examples where $W$ is smooth and simply connected and $S$ separates $Y$; and when $W$ is topological and $S$ is non-separating. }
\end{T:introsphere}

In the converse direction, we give a general sufficient condition for a null-homotopic sphere $S$ to extend to a topologically embedded ball in Theorem~\ref{T:top-ball-easy}. As a special case, Corollary~\ref{C:top-ball-easy} gives such an extension result when $Y$ is a homology sphere, and $W$ is topological and has abelian fundamental group.  Moreover, we show that it is possible for a null-homotopic sphere to extend to a topologically embedded ball, but no smoothly embedded one. 

\begin{T:smooth-ball} {\it There exists a smooth simply connected $W$ with $Y=\partial W$ a homology sphere and $S\subseteq Y$ such that $S$ bounds a topologically embedded ball in $W$ but does not extend to a smoothly embedded ball in $W$.}
\end{T:smooth-ball}

%In Section~\ref{S:spheres}, we find examples where such a sphere does not bound a topologically embedded ball in $W$, showing that the most general analogue of Dehn's lemma for spheres does not hold. In Theorem~\ref{T:nodehnforsphere}, we demonstrate this failure both for when $S$ separates the boundary and when it is non-separating.  However, in some circumstances we do find an embedded ball with boundary $S$. For example, if $Y$ is a homology sphere and $W$ has abelian fundamental group, then we show in Corollary~\ref{C:top-ball-easy} that $S$ bounds an embedded topological ball in $W$; a more general result is given in Theorem~\ref{T:top-ball-easy}.  We also give examples in Theorem~\ref{T:smooth-ball} where $S$ bounds a topologically embedded ball, but no smoothly embedded ball. 

We also consider a separating incompressible torus $T\subseteq Y$ that extends to a map of a solid torus in $W$. Here we are following an analogy set forth in~\cite{scharlemann:loop}, i.e.\ the torus corresponds to a loop (as in the loop theorem) and the solid torus plays the role of a compressing disk. As before we show that the most general analogue of Dehn's lemma for tori does not hold. 

\begin{T:nodehnfortori} {\it There exists a contractible smooth $4$-manifold $W$ with an incompressible torus $T\subseteq Y$, such that $T$ extends to a map of a solid torus, but not to any topological embedding of a solid torus in $W$.}
\end{T:nodehnfortori}

%In Theorem~\ref{T:nodehnfortori}, we construct a family of examples where there is no topologically embedded solid torus in $W$ with boundary $T$. 
Analogous to the case for spheres, we give a sufficient condition for the analogue of Dehn's lemma to hold for tori in Proposition~\ref{P:embed}, and find cases where $T$ bounds a topologically embedded solid torus but no smoothly embedded solid torus.

\begin{T:torus-smooth} {\it There exists a contractible smooth $4$-manifold $W$ and a torus $T\subseteq Y$, where $T$ extends to a topological embedding of a solid torus in $W$, but not a smooth embedding.} \end{T:torus-smooth}

A recent result of Gompf~\cite{gompf:infinite-cork,gompf:infinite-cork-handlebody} about infinite order corks provides a different (but related) family of incompressible tori in the boundary of contractible 4-manifolds that do not bound smoothly embedded solid tori. This is based on constructing certain diffeomorphisms of the 3-manifold boundary using curves on the tori and studying whether these functions extend over the 4-manifold. We discuss this further in Section~\ref{S:diff} and specifically answer Question 1.6 from~\cite{gompf:infinite-cork} in the negative.

A theorem of Eliashberg~\cite{eliashberg:filling} (see~\cite[Theorem 16.3]{cieliebak-eliashberg:stein} for a detailed proof) says that any embedded 2-sphere in the boundary of a Stein manifold $W$ bounds a smoothly embedded ball in $W$. Thus our results on the failure of Dehn's lemma for spheres may be used to conclude that the 4-manifolds under consideration do not admit Stein structures. Eliashberg~\cite[Corollary 4.2]{eliashberg:filling} used the same argument to prove that $S^2 \times D^2$ admits no Stein structure; the difference here is that the $2$-spheres in our examples are null-homotopic. 

One may also wonder whether the corresponding result holds for tori, i.e.\ must a torus in the boundary of a Stein manifold $W$ bound a smoothly embedded solid torus in $W$? In this form it is easy to see that the answer is no, since $T^3$ is the boundary of $T^2\times D^2$ and thus most of the tori in $T^3$ do not bound solid tori. Moreover, our example in Theorem~\ref{T:torus-smooth} consists of a torus in the boundary of a Stein manifold $W$ which bounds a topologically embedded solid torus in $W$ but no smoothly embedded solid torus. 

\begin{acknowledgement}  
We thank Bob Gompf for pointing out the relevance of Eliashberg's work to our study of embedded spheres, and for the reference to~\cite{cieliebak-eliashberg:stein}. We also thank Peter Teichner for some comments related to the proof of Theorem~\ref{T:nodehnforsphere}\eqref{nonseparating}.
\end{acknowledgement}

%%%%%%%%%%%%%%%%%%%%%%%%%%%%%%%%%%%%%%%%%%%%%%%%%%%%%
\section{Dehn's lemma for spheres}\label{S:spheres}

Throughout this section, $Y$ denotes the $3$-manifold boundary of a compact $4$-manifold $W$. The $4$-manifold $W$ will be either smooth or merely topological, which we will indicate in any given case. We will use $S$ to denote a given essential embedded $2$-sphere in $Y$.  If $S$ is non-separating, then $Y$ splits as a connected sum of a $3$-manifold with a circle bundle over $S$ via a standard $3$-dimensional argument (see e.g. \cite[Lemma~3.8]{hempel:3manifolds}).  Otherwise $S$ will be null-homologous, in which case $Y = Y_1 \#_S Y_2$, for some $3$-manifolds $Y_1$ and $Y_2$, where the notation indicates that the connected-sum is performed along the sphere $S$.  For any closed manifold $M^3$, we denote by $M^\circ$ the punctured manifold $M -\inte(B^3)$. 

We start with two straightforward lemmata, giving a sufficient condition for $S$ to be null-homotopic in $W$ and a consequence of a null-homologous sphere in the boundary of a $4$-manifold bounding an embedded ball.

\begin{lemma}\label{L:spherenullhomotopic} Suppose that $W$ is simply connected and $S$ is null-homologous in $Y$. Then $S$ is null-homotopic in $W$.\end{lemma}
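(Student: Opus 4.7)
The plan is to invoke the Hurewicz theorem in degree two. Since $W$ is simply connected, Hurewicz gives an isomorphism $\pi_2(W) \cong H_2(W)$, and moreover any map $S^2 \to W$ is null-homotopic if and only if it represents the zero class in $H_2(W)$. So it suffices to check that the inclusion $S \hookrightarrow W$ sends the fundamental class $[S]$ to $0 \in H_2(W)$.

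For that, I would factor the inclusion as $S \hookrightarrow Y \hookrightarrow W$ and use functoriality of $H_2$. The hypothesis that $S$ is null-homologous in $Y$ means exactly that $[S] = 0$ in $H_2(Y)$. Applying $H_2$ to the inclusion $Y \hookrightarrow W$ then gives $[S] = 0$ in $H_2(W)$, which completes the argument.

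There is really no obstacle: the lemma is a one-line consequence of Hurewicz and naturality once the hypotheses are unpacked. The only minor point to articulate is that "null-homotopic in $W$" refers to the composite map $S^2 \xrightarrow{\cong} S \hookrightarrow Y \hookrightarrow W$, which is precisely the setting in which the Hurewicz identification is used.
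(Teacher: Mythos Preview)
Your proof is correct and follows essentially the same approach as the paper: use functoriality to push $[S]=0$ from $H_2(Y)$ into $H_2(W)$, then invoke the Hurewicz isomorphism $\pi_2(W)\cong H_2(W)$ for the simply connected $W$. The paper's version is just a terser rendering of exactly this argument.
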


\begin{proof}Since $S$ is trivial in $H_2(Y)$, it is also trivial in $H_2(W)$. However, $\pi_2(W)\cong H_2(W)$ by the Hurewicz theorem since $W$ is simply connected. \end{proof}

\begin{lemma}\label{L:boundaryconnectedsum} If $B$ is a properly embedded ball in a simply connected $4$-manifold $W$ then $W$ splits as a boundary-connected sum along $B$.\end{lemma}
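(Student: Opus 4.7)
The plan is to prove the lemma in two steps: first establish that $B$ must separate $W$, using the simple connectivity hypothesis, and then recognize the resulting decomposition directly from the definition of boundary-connected sum.

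For the separation step, I would argue by contradiction. Suppose the properly embedded $3$-ball $B$ does not separate $W$. The normal bundle of $B$ in $W$ is an orientable line bundle over the contractible space $B$, so it is trivial; thus $B$ has a tubular neighborhood of the form $B\times(-\epsilon,\epsilon)$ in $W$. Using this product structure together with the assumed connectivity of $W\setminus B$, one constructs an embedded loop $\gamma\subset W$ meeting $B$ transversely in a single point, by joining a point of $B\times\{-\epsilon/2\}$ to a point of $B\times\{\epsilon/2\}$ via a path in $W\setminus B$ and closing up through $B$. Then $[B]\in H_3(W,\partial W;\Z/2)$ pairs nontrivially with $[\gamma]\in H_1(W;\Z/2)$ under the $\Z/2$ intersection form given by Poincar\'e--Lefschetz duality, forcing $[\gamma]\neq 0$. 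This contradicts $H_1(W;\Z/2)=0$, which follows from $\pi_1(W)=0$.

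For the second step, let $W_1$ and $W_2$ denote the closures of the two components of $W\setminus\inte(B)$. The local product structure near $B$ ensures that there are exactly two such components and that each $W_i$ is a compact connected $4$-submanifold of $W$ with $W_1\cap W_2=B$. Running the same transversality argument inside $\partial W$ shows that $\partial B$ separates $\partial W$, so $\partial W_i=B\cup_{\partial B}(\partial W\cap W_i)$ is a genuine closed $3$-manifold containing $B$ as a distinguished embedded $3$-ball. This is precisely the structure of a boundary-connected sum: $W$ is obtained from $W_1$ and $W_2$ by identifying them along their common boundary $3$-ball $B$, giving $W=W_1\bconn W_2$ along $B$.

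The main subtlety is the separation step, where simple connectivity enters through the vanishing of $H_1(W;\Z/2)$; once separation is established, the remainder of the proof is essentially a matter of unpacking the definition of $\bconn$.
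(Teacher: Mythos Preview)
Your proof is correct in substance. The separation argument via the $\Z/2$ intersection pairing between $H_3(W,\partial W;\Z/2)$ and $H_1(W;\Z/2)$ is a valid and standard way to show that a properly embedded codimension-one ball in a simply connected manifold must separate. One small slip: in your second step you cannot ``run the same transversality argument inside $\partial W$'' to conclude that $\partial B$ separates $\partial W$, since $\partial W$ is not assumed simply connected. Fortunately this step is unnecessary: once $B$ separates $W$ into open pieces $W_1'$ and $W_2'$, the collar $\partial B\times(-\epsilon,\epsilon)\subset\partial W$ places points of $\partial W$ on each side, so $\partial W\setminus\partial B=(\partial W\cap W_1')\sqcup(\partial W\cap W_2')$ with both pieces nonempty, and the separation of $\partial W$ follows from that of $W$.

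The paper takes a slightly different but equally elementary route: rather than constructing a dual loop, it applies the Mayer--Vietoris sequence to the decomposition $W=(W\setminus B\times[0,1])\cup(B\times[0,1])$ and reads off $\widetilde H_0(W\setminus B\times[0,1])\cong\Z$ directly from the segment
\[
H_1(W)\to\widetilde H_0(B\times\{0,1\})\to\widetilde H_0(B\times[0,1])\oplus\widetilde H_0(W\setminus B\times[0,1])\to\widetilde H_0(W).
\]
Your geometric argument has the advantage of making the obstruction visible as an explicit loop; the paper's argument is a one-line computation that avoids any appeal to Poincar\'e--Lefschetz duality. Both hinge on the single input $H_1(W)=0$ coming from simple connectivity.
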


\begin{proof} It suffices to show that $W - (B\times [0,1])$ has two connected components. We use the Mayer--Vietoris sequence for $W=W - (B\times [0,1]) \cup (B\times [0,1])$: 
\[
\xymatrix{
H_1(W)\ar[r] &\widetilde{H_0}(B\times \{0,1\}) \ar[r] &\widetilde{H_0}(B\times [0,1])\oplus \widetilde{H_0}(W-B\times [0,1])\ar[r] &\widetilde{H_0}(W)
}
\]
which is simply 
\[
\xymatrix{
0\ar[r]&\Z \ar[r] & \widetilde{H_0}(W-B\times [0,1])\ar[r] & 0
}\hfill %\qedhere
\]
\end{proof}

The following theorem shows that Dehn's lemma does not always hold in dimension $4$, in both the separating and non-separating cases. In each setting the arguments rely on properties of the intersection form.

\begin{theorem}\label{T:nodehnforsphere} A sphere in the boundary of a $4$-manifold $W$ that is null-homotopic in $W$,  need not bound a ball in $W$. More precisely,
\begin{enumerate}
\item\label{separating} There exists a sphere $S$ and a $4$-manifold $W$ such that $W$ is smooth and simply connected, $S$ is null-homotopic in $W$ and separating in $Y=\partial W$, but $S$ does not bound an embedded topological ball in $W$.
\item\label{nonseparating} There is a topological $4$-manifold $W$ with boundary $S^1 \times S^2$ such that the essential non-separating $2$-sphere $S \subset S^1 \times S^2$ is null-homotopic in $W$, but does not bound an embedded $3$-ball in $W$.
\end{enumerate}
\end{theorem}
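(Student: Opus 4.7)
The plan is to prove each part by assuming $S$ bounds an embedded $3$-ball in $W$ and deriving a contradiction from the structure this imposes on $W$.

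For part (1), take $W$ smooth and simply connected with $\partial W = Y_1 \#_S Y_2$. Then $S$ is null-homologous in $Y$ and, by Lemma~\ref{L:spherenullhomotopic}, null-homotopic in $W$. If $S$ bounds an embedded $3$-ball in $W$, Lemma~\ref{L:boundaryconnectedsum} forces $W = W_1 \bconn W_2$ with $\partial W_i = Y_i$, and the intersection form correspondingly splits as $Q_W = Q_{W_1} \oplus Q_{W_2}$, each summand realized by a smooth simply connected filling of the respective $Y_i$. I would choose $Y_1$ and $Y_2$ to be integer homology spheres whose smooth simply connected fillings are tightly constrained --- for instance via Rokhlin's theorem on spin signatures, Donaldson's diagonalization theorem, or a Fr{\o}yshov / $d$-invariant inequality --- and then construct a smooth simply connected $W$ with $\partial W = Y_1 \# Y_2$ whose intersection form admits no splitting compatible with those constraints. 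A natural candidate uses $Y_i = \pm \Sigma$ for a Brieskorn homology sphere $\Sigma$ with nontrivial Rokhlin invariant, with $W$ obtained by explicit handle attachments producing an indecomposable intersection form. The key hurdle is the algebraic verification that no splitting of $Q_W$ matches the constraints on fillings of $Y_1$ and $Y_2$.

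For part (2), suppose $S$ bounds a topologically embedded $3$-ball $B$ in $W$. Its bicollar $\nu(B) \cong B^3 \times [-1,1]$ is attached to $W' := \overline{W \setminus \nu(B)}$ along $B^3 \sqcup B^3$, exhibiting $W$ as $W'$ with a $4$-dimensional $1$-handle attached. Cutting the non-separating $S \subset S^1 \times S^2$ produces $[0,1] \times S^2$, which together with the two copies of $B^3$ capping its ends becomes $S^3$, so $\partial W' = S^3$. The plan is to construct, using Freedman's topological surgery theory, a topological $4$-manifold $W$ with $\partial W = S^1 \times S^2$ in which $S$ is null-homotopic (for instance by arranging $\pi_2(W) = 0$) but for which no such $1$-handle decomposition $W = W' \cup h^1$ with $\partial W' = S^3$ exists. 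A natural source of candidates is the complement, suitably modified to achieve the prescribed boundary, of a topologically locally flat but non-standard disk in $B^4$ or sphere in $S^4$. The main obstacle is producing a genuine topological obstruction to the $1$-handle decomposition, most naturally phrased at the level of the $\Z[\Z]$-equivariant cellular chain complex of the universal cover of $W$.

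In both cases $S$ is forced to be null-homotopic by the choice of $W$, so the content of the theorem lies entirely in obstructing the extension to an embedded $3$-ball --- an intersection-form obstruction in the smooth simply connected case, and a topological handle-decomposition obstruction in the general topological case.
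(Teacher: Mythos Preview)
Your outline for part~(1) has a fatal gap: you propose taking the $Y_i$ to be integer homology spheres, but this choice cannot possibly yield a counterexample. Indeed, the paper proves elsewhere (Theorem~\ref{T:top-ball-easy} and Corollary~\ref{C:top-ball-easy}) that whenever $Y = Y_1 \conn_S Y_2$ is a homology sphere and $\pi_1(W)$ is abelian (in particular simply connected), the sphere $S$ \emph{does} bound a topologically embedded ball in $W$. So no choice of Brieskorn spheres, Rokhlin invariants, or $d$-invariants will work here; moreover, Donaldson's theorem and Fr{\o}yshov-type inequalities are smooth obstructions and say nothing about the topological fillings $W_i$ that would arise from a merely topological ball. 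The paper's actual construction sidesteps this by taking $Y_1 = -L$ and $Y_2 = L$ for a lens space $L = L(p,q)$ with $\pm q$ not a quadratic residue mod $p$, and $W$ obtained from $L^\circ \times I$ by surgery on a generator of $\pi_1$. The obstruction is then a linking-form argument (due to Saeki) showing that such an $L$ cannot bound a simply connected topological $4$-manifold with $b_2 = 1$, while the splitting $W = W_1 \bconn W_2$ together with $H_2(W) \cong \Z^2$ would force exactly that. The non-triviality of $H_1(Y_i)$ is essential.

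For part~(2) your instinct to look for a $\Z[\Z]$-equivariant obstruction is correct, and your reduction to $W = W' \cup h^1$ with $\partial W' = S^3$ is the right shape. However, the specific suggestion of arranging $\pi_2(W) = 0$ is unproductive: that would make the $\Z[\Z]$ intersection form trivial, leaving nothing to obstruct. The paper instead takes $W = X \setminus (S^1 \times \inte B^3)$ where $X$ is a closed topological $4$-manifold with $\pi_1 \cong \Z$ whose $\Z[\Z]$ intersection form is \emph{not extended} from a form over $\Z$ (such $X$ exist by Hambleton--Teichner). If $S$ bounded a ball, one would recover $X \cong (S^1 \times S^3) \conn X_0$ with $X_0$ simply connected, forcing the form to be extended. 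The substantive work, which your outline does not address, is verifying that $S$ is null-homotopic in $W$; the paper does this by an explicit handle construction of $X$ and a Mayer--Vietoris argument in the universal cover showing the lift $\St$ is null-homologous.
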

\begin{proof} For the first part, let $L$ be a lens space $L(p,q)$, where $\pm q$ is not a quadratic residue mod $p$, for example $L(5,2)$. Let $\gamma$ be a curve generating $\pi_1(L)$. Let $W$ be the $4$-manifold obtained from $L^\circ\times [0,1]$ by performing surgery on $\gamma$ pushed into the interior. By construction $W$ is simply connected with $Y=\partial W=-L\# L$ where the connected-sum is performed along a sphere $S$. Since $S$ is null-homologous in $Y$, $S$ is null-homotopic in $W$ by Lemma~\ref{L:spherenullhomotopic}. We compute that $H_2(W)=\Z\oplus \Z$.

Suppose that $S$ bounds an embedded topological ball in $W$. Then by Lemma~\ref{L:boundaryconnectedsum}, $W$ decomposes along the ball as a boundary-connected sum $W_1\natural W_2$. Since $W$ is simply connected, each $W_i$ is simply connected, and since the ball is bounded by $S$, $\partial W_1=-L$ and $\partial W_2=L$. From the following piece of the long exact sequence of the pair $(W_i,\partial W_i)$:
\[
\xymatrix{
0\ar[r]&H_2(W_i) \ar[r] & H_2(W_i,\partial W_i)\ar[r] & H_1(\partial W_i) \ar[r] & 0
}\hfill 
\]
we see that $H_2(W_i,\partial W_i)$ surjects onto $\Z/p\Z$. By Poincar\'{e}--Lefschetz duality and the universal coefficient theorem, and the fact that $H_1(W_i)=0$, $H_2(W_i,\partial W_i)$ is torsion-free. Thus, we cannot have $H_2(W_i)=0$ for either $i=1,2$. Since $H_2(W)=H_2(W_1)\oplus H_2(W_2)=\Z\oplus \Z$, this implies that $H_2(W_1)\cong H_2(W_2)\cong \Z$. However by~\cite[Proposition 2.2, Example 2.3]{Saeki89}, 
we know that a lens space bounds a simply connected topological $4$-manifold with $b_2=1$ if and only if $\pm q$ is a quadratic residue mod $p$. Briefly, the argument consists of showing that the intersection form of such a $4$-manifold is represented by the matrix $(p)$ while the linking form on the lens space is represented by $(\pm q/p)$.

The proof of the second part is based on a construction of Hambleton and Teichner~\cite{hambleton-teichner:non-extended}, who find a closed topological manifold $X$ with $\pi_1(X) \cong \Z$, where the $\Z[\Z]$ intersection form is not induced from a unimodular form over $\Z$ by tensoring with 
$\Z[\Z]$.  In fact, since their form (over $\Z$) is odd, they find two such manifolds, distinguished up to homeomorphism by the Kirby-Siebenmann invariant. Letting $X$ denote either one, then our example $W$ is simply $X - S^1 \times \inte(B^3)$, where the circle generates $\pi_1(X)$.  If $S = \partial B^3$ bounds a disk, then the generator of $H_3(X)$ would be represented by a $3$-sphere, exhibiting $X$ as a connected sum of $S^1\times S^3$ with a closed simply connected $4$-manifold, and contradicting the property of the intersection form.

To verify that $S$ is null-homotopic, we go through a construction of a manifold with the same $\Z[\Z]$ intersection form as the one in~\cite{hambleton-teichner:non-extended}, remarking at the end that we could get either value of the Kirby-Siebenmann invariant. Hambleton and Teichner appeal to a general existence theorem of Freedman-Quinn~\cite[Theorem 10.7]{freedman-quinn} that says that an arbitrary unimodular Hermitian form over $\Z[\Z]$ is realized by a topological $4$-manifold. This proof constructs a stabilized version of the form, and then removes hyperbolic summands by surgery; it is not easy to track a given homology class in the construction. So we give an alternate version that will help in verifying that $S$ is null-homotopic. 

Suppose that we have a unimodular Hermitian form on a free module over $\Z[\Z]$, given by an $n \times n$ matrix $A$ with coefficients in $\Z[\Z]$, and construct $W$ in two steps.  We choose a generator $t$ of the fundamental group of $S^2 \times S^1$, and identify $\Z[\Z]$ with $\Z[t,t^{-1}]$.  We also treat $t$ as a generator of the covering transformations of the infinite cyclic cover of $S^2 \times S^1$ (and any other manifold that might turn up in this discussion.) First we consider 
\begin{equation}\label{E:W}
W_0 = \left(S^2 \times S^1 \times I\right) \cup \bigcup_{i= 1}^n h_i^2
\end{equation}
where the $h_i^2$ are $2$-handles attached along null-homologous curves $\gamma_i$ in $S^2 \times S^1\times \{1\}$. The attaching curves are organized so that they link one another according to the matrix $A$. More precisely, since each $\gamma_i$ is null-homologous, we can fix a lift $\gtilde_i$ to the universal cover $S^2 \times \R$. The curves will have the property that for $i \neq j$
$$
\sum \lk(\gtilde_i, t^k(\gtilde_j)) t^k
$$
is the $ij$ entry in $A$.

The framings of the $\gamma_i$ are determined so that if $\gtilde_i'$ denotes the lift of a parallel to $\gamma_i$, then
$$
\sum \lk(\gtilde_i', t^k(\gtilde_i)) t^k
$$
is the $ii$ entry in $A$.  In order to construct such a link, we start with a trivial $n$-component link, and do finger moves that go around the $S^1$ direction as needed. For example, if the $ii$ entry is $mt^{-2} + k +  mt^2$ the attaching circle of $h_i$ would be as indicated in Figure \ref{F:finger}, with framing $k+2m$. 
\begin{figure}[h]
  \centering
    \labellist
   \small\hair 2pt
 \pinlabel {$m$}  at 248 345
 \pinlabel {$k+2m$}  at 248 50
\endlabellist
  \includegraphics[scale = .4]{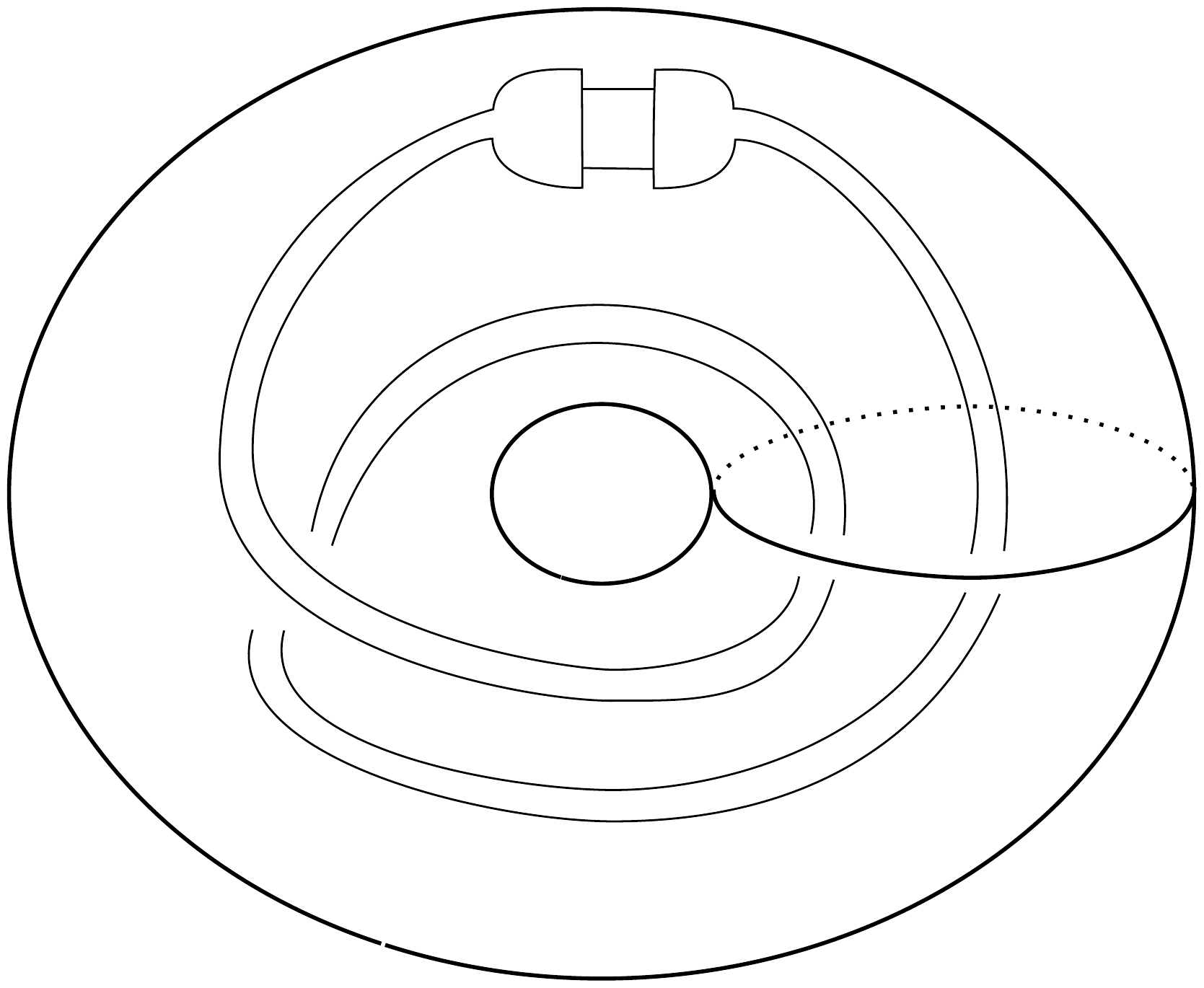}
  \caption{Example surgery in proof of Theorem~\ref{T:nodehnforsphere}}\label{F:finger}
\end{figure}
This is essentially the same argument as Wall~\cite[Theorem 5.8]{wall:book} uses for realization of the action of $L_4(\Z)$ on the structure set of a $(4k-1)$-manifold with $\pi_1 = \Z$ (the homology structure set if $k=1$), or the picture in \cite[page 344]{rolfsen:knots}. Compare the construction in~\cite[\S 4]{friedl-hambleton-melvin-teichner}, which is based on the same basic idea.

The boundary of $W_0$ consists of the original copy of $S^2 \times S^1$, together with a manifold $Y$. Because the matrix $A$ is unimodular, it follows that $Y$ has the same $\Z[\Z]$ homology as $S^2 \times S^1$, i.e.\ $H_1(Y; \Z[\Z])$ is trivial. The proof of \cite[Theorem 11.7B]{freedman-quinn}, showing that Alexander polynomial one knots are slice, also shows that $Y = \partial V$, where $V$ has the homotopy type of a circle. Then we set $W = W_0 \cup_Y V$.

To show that $S$ is null-homotopic in $W$, we lift it to a sphere $\St \subset \partial \wtilde$, and show that $\St$ is null-homologous in $\wtilde$. Since $\wtilde$ is simply connected, Lemma~\ref{L:spherenullhomotopic} will imply that $\St$ is null-homotopic.  
Note that as a $\Z[\Z]$ module, 
$$
H_2(\wtilde_0) \cong \Z \oplus \left(\Z[\Z]\right)^n
$$
where the action of $\Z[\Z]$ on the first summand is via the augmentation $\Z[\Z] \to \Z$. This is readily seen by lifting the handle decomposition \eqref{E:W} to the universal cover, for each lift of a $2$-handle $h_i^2$ is attached along a null-homologous curve in $S^2 \times \R$.  In particular, the first summand is generated by the homology class of $\St$.

Now consider the Mayer-Vietoris sequence for the decomposition 
$$
\wtilde = \wtilde_0 \cup_{\ytilde} \vtilde
$$
which looks like  
$$
\xymatrix{
&&{}\save[]+<2.5cm,.5cm>*\txt<3pc>{%
\hspace*{1.5ex}
\raisebox{.2cm}{$H_2(\St)$}}
\ar[d]_{i_S}\ar[dr] \restore& \\
H_3(\wtilde) \ar[r] \ar@{=}[d]& H_2(\ytilde) \ar@{=}[d]\ar[r]^-{(i,j)} & H_2(\vtilde)\bigoplus H_2(\wtilde_0) \ar@{=}[d]\ar[r]^-k & H_2(\wtilde)\ar@{=}[d]\\
0 \ar[r] & \Z \ar[r]^-{(i,j)} &0  \bigoplus  (\Z \oplus \left(\Z[\Z]\right)^n) \ar[r]^-k   & H_2(\wtilde)
}
$$
Above, $H_3(\widetilde{W})=0$ since $W_0$ is built from $S^2\times \R$ by adding $2$-handles and $V$ is a homotopy circle. Any $\Z[\Z]$-module map from $\Z$ to $\left(\Z[\Z]\right)^n$ must be trivial, thus the homomorphism $j$ in the diagram must have its image in the $\Z$ summand of $H_2(\wtilde_0)$; similarly the image of $i_S$ lies in that same summand. By exactness, the restriction of $k$ to $H_2(\wtilde_0)$, composed with $j$, is $0$. But this means that this same restriction, composed with $i_S$, is $0$.  This last composition is exactly the map induced by the inclusion of $\St$ into $\wtilde$. Thus, $\widetilde{S}=0\in H_2(\widetilde{W})=\pi_2(W)$ as claimed. 

When done with care, as in~\cite[\S 4]{friedl-hambleton-melvin-teichner}, the construction above yields a manifold $W$ with vanishing Kirby-Siebenmann invariant (relative to $S^1 \times S^2$.) To get a manifold with non-vanishing Kirby-Siebenmann invariant, we proceed as in the proof of~\cite[Theorem 1.5]{freedman:simply-connected}. Since the ordinary $\Z$-valued intersection form is odd, we can do handle slides so that a characteristic element in the $\Z_2$ homology is represented by a single $2$-handle. As in~\cite{freedman:simply-connected}, changing the attaching map of this handle by connected sum with a trefoil knot will change the Kirby-Siebenmann invariant.
\end{proof}

We remark that the manifolds used in the second part of the proof above are known to not be smoothable~\cite{friedl-hambleton-melvin-teichner}.  A smooth example would come from a smoothable $4$-manifold with fundamental group $\Z$ whose $\Z[\Z]$ intersection form is not extended from the integers; as far as we know no such manifold has been constructed.

Theorem~\ref{T:nodehnforsphere} shows that an analogue for Dehn's lemma does not hold for spheres in the boundary of some $4$-manifolds. However, the statement does hold if we restrict to the situation where $W$ is simply connected and $Y$ is a homology sphere. Indeed, the following more general result is true. 

\begin{theorem}\label{T:top-ball-easy} Suppose that $Y =Y_1 \conn_S Y_2$, where $Y_2$ is a homology sphere and $Y = \partial W$, such that
\begin{enumerate}
\item $\pi_1(W)$ is `good', and
\item the induced map $\pi_1(Y_2) \to \pi_1(W)$ is trivial.
\end{enumerate}
Then there is a topological embedding of $B^3$ in $W$ with boundary $S$.\end{theorem}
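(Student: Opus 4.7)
The plan is to push the $Y_2$-part of the boundary of $W$ slightly into the interior and then use ambient topological surgery to reduce it to an embedded $3$-ball. The central tool is Freedman's disk embedding theorem, which applies because $\pi_1(W)$ is good.

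First I would push $Y_2^\circ \subset \partial W$ inward, obtaining a properly embedded $3$-manifold $N \subset W$ homeomorphic to $Y_2^\circ$ with $\partial N = S$ and a collar $Y_2^\circ \times [0,1]$ separating $N$ from $\partial W$ away from $S$. Fix a handle decomposition of $Y_2$ with one $0$-handle, $n$ $1$-handles, $n$ $2$-handles, and one $3$-handle (equal $1$- and $2$-handle counts being forced by $H_1(Y_2) = 0$), and arrange that $Y_2^\circ$ is the complement of an open $3$-handle. This gives embedded loops $\beta_1, \ldots, \beta_n \subset N$ generating $\pi_1(N) = \pi_1(Y_2)$, together with the $2$-handle attaching circles $\rho_1, \ldots, \rho_n \subset N$, each bounding an embedded $2$-disk $e_j^2 \subset N$ (the $2$-handle core). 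After handle slides I may further assume that $\rho_j$ has algebraic intersection number $\delta_{ij}$ with the cocore of the $i$-th $1$-handle. By hypothesis (2), every $\beta_i$ and every $\rho_j$ is null-homotopic in $W$, so bounds an immersed nullhomotopy disk there.

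The key step is to invoke Freedman's disk embedding theorem to upgrade immersed nullhomotopy disks $\Delta_i$ for the $\beta_i$ to a disjoint family of locally flat, properly embedded disks $D_1, \ldots, D_n \subset W$ with $\partial D_i = \beta_i$, interiors disjoint from $N$, and prescribed framings. The algebraic duals required by the theorem are furnished by the immersed $2$-spheres $G_j := \widetilde\Delta_j \cup_{\rho_j} e_j^2 \subset W$, where $\widetilde\Delta_j$ is an immersed nullhomotopy disk for $\rho_j$ pushed to the interior side of $N$; the identity intersection matrix from the handle decomposition then yields $\lambda(\Delta_i, G_j) = \delta_{ij}$ in $\Z[\pi_1(W)]$. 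Vanishing of the equivariant self- and pairwise intersection forms on the $\Delta_i$ themselves can be arranged by standard finger moves and tubing with parallel copies of the $G_j$.

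With the $D_i$ in place, ambient $3$-dimensional surgery on $N$ along the $\beta_i$---equivalently, attaching $4$-dimensional $2$-handles to $N \times [0,1] \subset W$ along the $\beta_i$---produces a cobordism $V \subset W$ from $N$ to a new properly embedded $3$-manifold $N' \subset W$ with $\partial N' = S$. The surgeries kill every generator of $\pi_1(N)$, so $\pi_1(N') = 1$; $3$-dimensional surgery preserves Euler characteristic, so $\chi(N') = \chi(Y_2^\circ) = 1$; and with $\partial N' = S^2$, Poincar\'e--Lefschetz duality forces $H_*(N') = H_*(B^3)$. Hence $N' \cup_S B^3$ is a simply connected closed $3$-manifold with the homology of $S^3$, and therefore homeomorphic to $S^3$, so $N' \cong B^3$. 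The principal obstacle is the disk-embedding step: although the algebraic duals arise cleanly from the handle data and hypothesis (2), carefully arranging vanishing of the equivariant intersection pairing on the $\Delta_i$ together with the correct framings requires the full force of the $\pi_1$-good machinery underlying Freedman's theorem.
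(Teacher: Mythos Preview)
Your strategy---push $Y_2^\circ$ into the interior and perform ambient surgery using Freedman's disk embedding theorem---has a genuine gap at the step where you produce algebraically transverse spheres. The identity matrix you invoke records the intersection of the attaching circles $\rho_j$ with the \emph{cocores} $c_i$ of the $1$-handles, not any intersection between $\beta_i$ and $e_j^2$. In fact $\beta_i$ lives in the $0$- and $1$-handles while the core disk $e_j^2$ lies entirely in the $j$-th $2$-handle, so generically $\beta_i \cap e_j^2 = \emptyset$; after pushing $e_j^2$ off $N$ you get no intersections there, and $\lambda(\Delta_i, G_j)$ reduces to the uncontrolled quantity $\lambda(\Delta_i, \widetilde\Delta_j)$. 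More decisively, the hypotheses of the theorem allow $W$ to be contractible (this is precisely the situation of Corollary~\ref{C:top-ball-easy}), in which case $\pi_2(W)=0$ and every immersed sphere $G_j$ is null-homotopic, forcing $\lambda(\Delta_i, G_j) = 0$ for all $i,j$. In that case there are \emph{no} algebraically transverse spheres whatsoever, and the disk embedding theorem as you state it does not apply; knots in the boundary of a contractible $4$-manifold need not bound locally flat disks.

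The paper circumvents this obstruction by using the ``good'' hypothesis in a different way: rather than embedding disks directly in $W$, it builds an auxiliary manifold $W'$---obtained by gluing to $W$ a simply connected homology cobordism $Z$ from $Y_2^\circ$ to itself that visibly contains a $3$-ball with boundary $S$---and then shows $W' \cong W$ rel boundary. This last step is accomplished by exhibiting an explicit $5$-dimensional $s$-cobordism between $W$ and $W'$ (using Kervaire's result that homology $4$-spheres bound contractible $5$-manifolds) and invoking the topological $s$-cobordism theorem, which is where the goodness of $\pi_1(W)$ enters. Thus the paper trades your disk-embedding problem in $W$ for an $s$-cobordism problem one dimension up, where no dual spheres in $W$ are needed.
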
 

Above, $\pi_1(W)$ is `good' in the sense of Freedman, i.e.\ the surgery sequence in the topological category is exact for $W$ and the $s$-cobordism theorem holds for $5$-dimensional $s$-cobordisms with fundamental group $\pi_1(W)$.
%\begin{proof*}[of Theorem~{\rm\ref{T:top-ball-easy}}]
\begin{proof}[Proof of Theorem~\ref{T:top-ball-easy}]
We recall that Freedman~\cite{freedman-quinn} showed that any homology sphere is the boundary of a contractible manifold; hence there is a simply connected integral homology cobordism, relative to the boundary, from $Y_2^\circ$ to $B^3$. Double this cobordism along $B^3$ to get a simply connected integral homology cobordism, say $Z$, from $Y_2^\circ$ to itself.  The important thing to note is that $S =\partial Y_2^\circ$ bounds a $3$-ball in $Z$, drawn in Figure~\ref{F:plug}.  The vertical part of the figure is $S \times I$.

\begin{figure}[htb]
\vspace*{2ex}
\labellist
\small\hair 0mm
  \pinlabel {$Y_2^\circ$}  at 160 118
 \pinlabel {$B^3$}  at 160 72
 \pinlabel {$Y_2^\circ$}  at 160 0
 %\pinlabel {$Z$}  at 80 100
 \pinlabel {$S$}  at -5 5
 \pinlabel {$S$}  at 350 5
\endlabellist
\centering
\includegraphics[scale=.8]{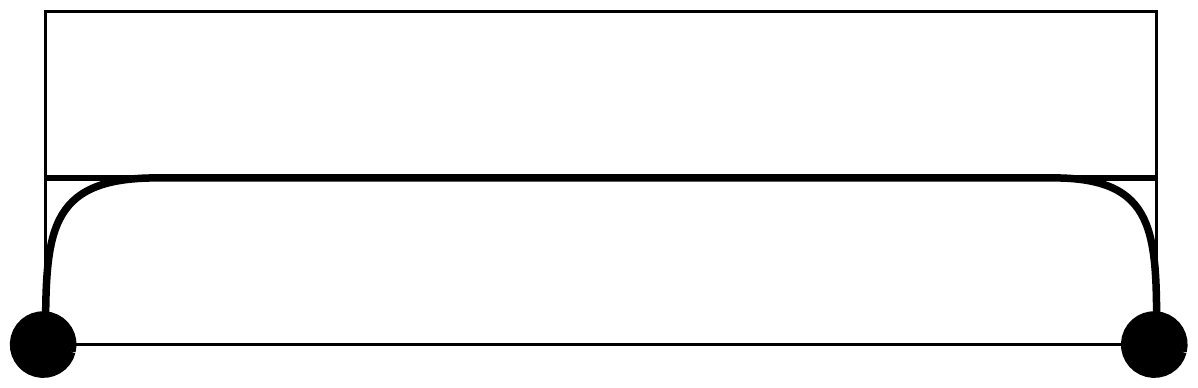}
\caption{The homology cobordism $Z$}
\label{F:plug}
\end{figure}

We construct a manifold $W'$ with boundary $Y$ by first adding a collar $Y_1^\circ \times I$ to $W$ along $Y_1^\circ \times \{0\}$ and then gluing on a copy of the manifold $Z$ constructed above; see Figure~\ref{F:Wprime}. By our hypothesis on fundamental groups, $\pi_1(W') \cong \pi_1(W)$.
\begin{figure}[htb]
\vspace*{2ex}
\labellist
\small\hair 0mm
 \pinlabel {$S$}  at 38 70
 \pinlabel {$S$}  at 380 70
 \pinlabel {$Y_1^\circ \times I$}  at 200  390
 \pinlabel {$W$}  at 200 200
\pinlabel {$Y_2^\circ$}  at 200 90
 \pinlabel {$Z$}  at 200 20
\endlabellist
\centering
\includegraphics[scale=.5]{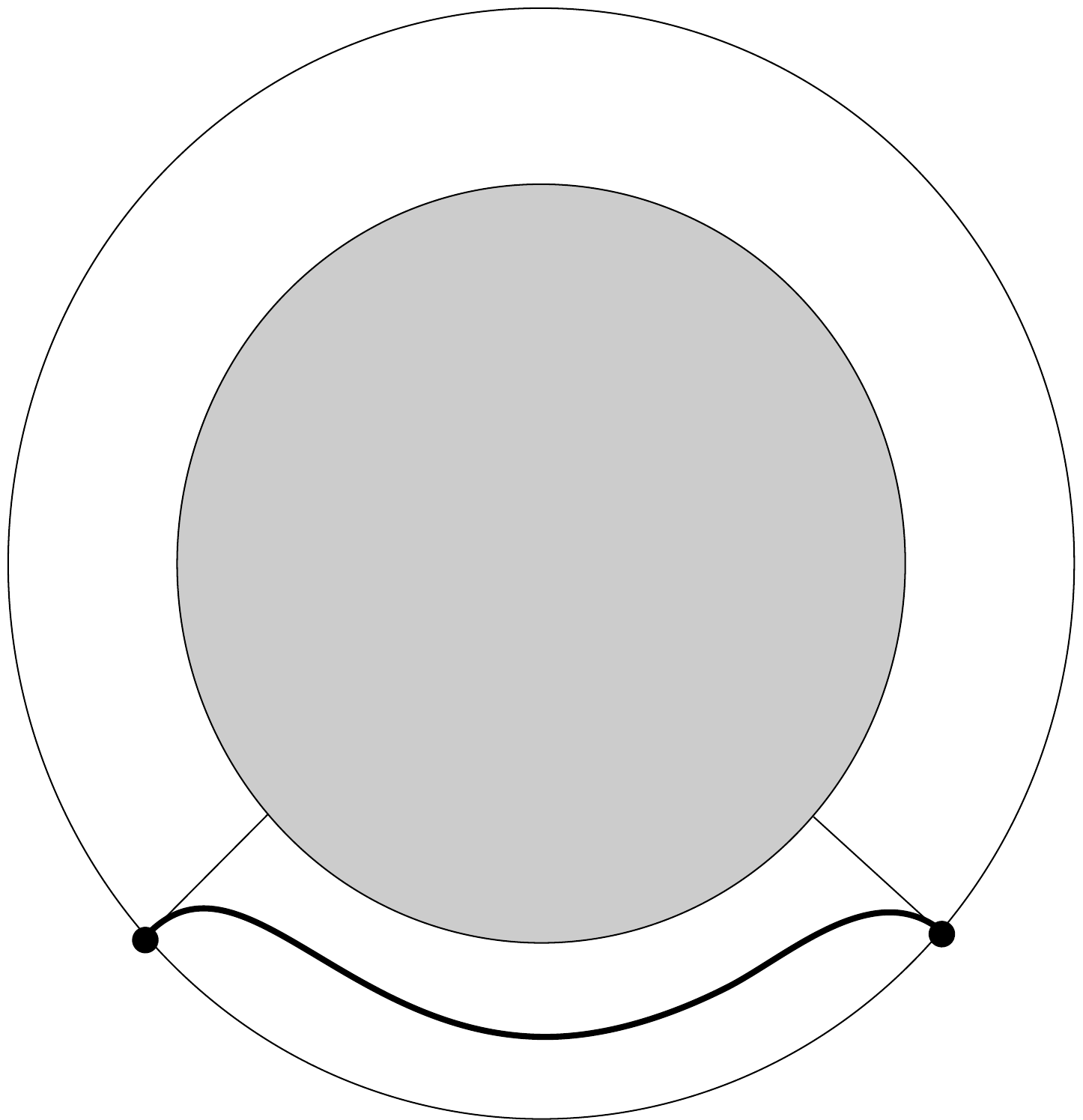}
\caption{The manifold $W'$}
\label{F:Wprime}
\end{figure}

We will show that $W'$ is homeomorphic to $W$ by constructing an $s$-cobordism between them that is a product along the boundary; our approach is similar to that in~\cite[Lemma~3.2]{kreck-luck-teichner:kneser}. Consider the union of $Z$ and $Y_2^\circ \times I$ glued along their boundary. This is a homology $4$-sphere, and as such~\cite[Theorem 3]{kervaire:homology} it bounds a contractible $5$-manifold $A$. We can view $A$ as a homology cobordism between $Y_2^\circ \times I$ and $Z$, with a product structure along $\partial(Y_2^\circ\times I)=Y_2\# -Y_2$.
 
Now we construct a cobordism  
$$
U = \left(W \cup_{Y^\circ_1} Y^\circ_1 \times I \right) \times I \cup_{(Y_2^\circ \times \{0\}\cup S\times I )\times I} A
$$ 
between $W$ and $W'$.  This is simpler than it sounds; see Figure~\ref{F:s-cobordism}.
\begin{figure}[htb]
\vspace*{2ex}
\labellist
\small\hair 0mm
 \pinlabel {$Y_2^\circ \times I$}  at 230 30
 \pinlabel {$Y_1^\circ \times I$}  at 83 60
 \pinlabel {$Z$}  at 220  390
 \pinlabel {$W$}  at 230 150
 \pinlabel {$Y_1^\circ \times I \times I$}  at 60 300
 \pinlabel {$A$}  at 220 300
\endlabellist
\centering
\includegraphics[scale=.55]{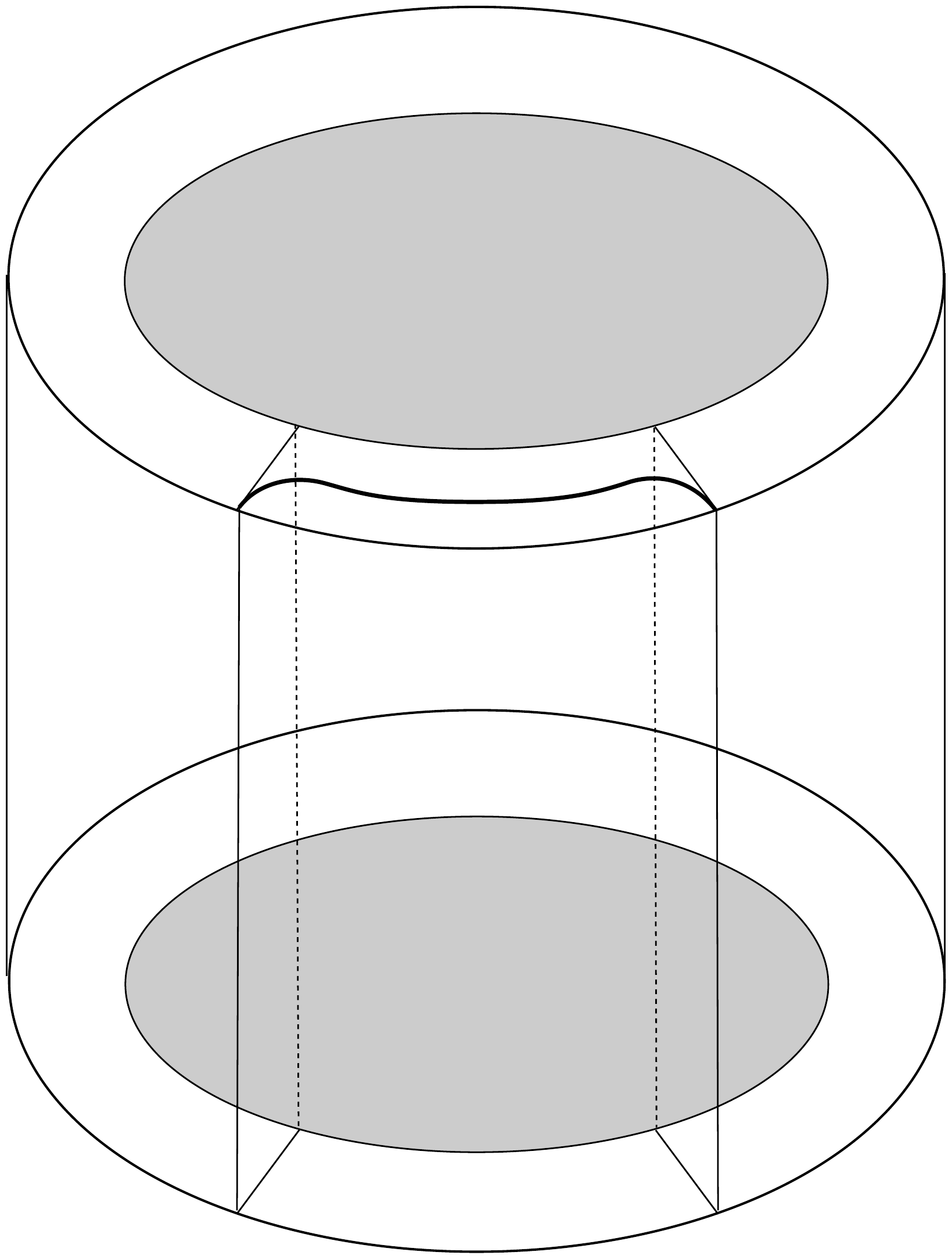}
\caption{The $s$-cobordism $U$}
\label{F:s-cobordism}
\end{figure}

A Mayer-Vietoris argument implies that $U$ is an integral homology cobordism (rel boundary) between $W$ and $W'$, and the hypothesis on fundamental groups implies (via van Kampen's theorem) that the inclusion maps from $W$ and $W'$ induce isomorphisms between their fundamental groups and $\pi_1(U)$. Next we show that $U$ is in fact an $s$--cobordism by examining the chain complex of $U$ with $\Z[\pi_1(U)]$ coefficients. We remind the reader that in this setting, the torsion for a topological $h$-cobordism may be defined
in terms of the (based) chain complex associated to a handle decomposition~\cite[Theorem 2.3.1]{Quinn:ends} of $U$ relative to $W$.

Let $B$ denote the product $(W\cup Y_1^\circ\times I)\times I$. Then $U=A\cup B$ with $A\cap B=(Y_2^\circ\times \{0\}\cup S\times I)\times I$. Let $\partial_0 U$ denote $W\cup \partial W\times I$, i.e.\ the `base' of the cobordism. Let $\partial_0 A=Y_2^\circ \times I$, $\partial_0 B=W\cup Y_1^\circ\times I$. 

By~\cite[Theorem 2.3.1]{Quinn:ends}, we get a (topological) handle decomposition for $A$ relative to $\partial_0 A $. Along with the product structure on $B$, this gives a handle decomposition for $U$ relative to $\partial_0 U$, all of whose handles lie in $A$ with attaching maps in $\partial_0 A$.

We are interested in the chain complex $C(\widetilde{U},\widetilde{\partial_0 U})$ of $\Z[\pi_1(\partial_0U)]$--modules, where $\widetilde{U}$ and $\widetilde{\partial_0 U}$ denote universal covers. Here and henceforth, the chain complex comes from the relative handle decomposition constructed above. Let $\widetilde{A}$ and $\widetilde{B}$ denote the induced covers of $A$ and $B$, respectively. Because all of the handles of $U$ lie in $A$ and $A$ is simply connected, the chain complexes (over $\Z[\pi_1(\partial_0 U)]$)
$$
C(\widetilde{A},\widetilde{\partial_0 A})\ \text{and}\ C(\widetilde{U},\widetilde{\partial_0 U})
$$
are identical.  In particular, 
\begin{equation}\label{E:complex}
C(\widetilde{A},\widetilde{\partial_0 A}) \cong C(A,\partial_0 A) \otimes_{\Z} \Z[\pi_1(\partial_0 U)].
\end{equation}
Since $A$ is a $\Z$-homology cobordism, $C(\widetilde{U},\widetilde{\partial_0 U})$ is also acyclic as a complex with coefficients in $\Z[\pi_1(\partial_0 U)]$, and it follows from Whitehead's theorem that $U$ is an $h$-cobordism.

Now, the Whitehead torsion $\tau(U,\partial_0 U)$ is defined to be the torsion of the chain complex $C(\widetilde{U},\widetilde{\partial_0 U})$, and \eqref{E:complex} implies that $\tau(U,\partial_0 U) $ is the same as the torsion of the homology cobordism $(A,\partial_0 A)$.  Hence 
$$
\tau(U,\partial_0 U)  = \rho_*(\tau(A,\partial_0 A))\ \text{where}\ 
\rho_*: \Wh(\{1\}) \to \Wh(\pi_1(\partial_0 U))
$$
is induced by the inclusion of the trivial group. Since $\Wh(\{1\})$ is trivial, $\tau(U,\partial_0 U)$ vanishes, so $U$ is an $s$-cobordism.
 
By the relative $s$-cobordism theorem, $U$ has a topological product structure extending the given one on the boundary. In particular, $W'$ is homeomorphic to $\partial_0 U \cong W$. Since $W'$ contains a topological ball with boundary $S$, it follows that $W$ contains such a ball.
\end{proof}

We note the following special case of the above theorem.
\begin{corollary}\label{C:top-ball-easy} If $Y=Y_1\#_S Y_2$ is a homology sphere and $\pi_1(W)$ is abelian (e.g.\ if $W$ is simply connected), there is an embedding of $B^3$ in $W$ with boundary $S$.
\end{corollary}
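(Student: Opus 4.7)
The plan is to reduce the corollary to Theorem~\ref{T:top-ball-easy} by verifying its two hypotheses in this more restricted setting. The key observation is that the homology sphere assumption forces the second condition essentially for free, once one invokes the abelianness of $\pi_1(W)$.

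First I would check that $Y_2$ is itself an integral homology sphere. Since $Y=Y_1\#_S Y_2$ is a homology sphere and $S\cong S^2$ has $H_1(S)=0$, the Mayer--Vietoris sequence for the decomposition $Y=Y_1^\circ\cup_S Y_2^\circ$ (together with $H_*(Y_i^\circ)\cong H_*(Y_i)$ in degrees $1$ and $2$, which follows from the long exact sequence of the pair $(Y_i,B^3)$) yields $H_*(Y_2)\cong H_*(S^3)$. In particular $H_1(Y_2;\Z)=0$, so the abelianization of $\pi_1(Y_2)$ is trivial.

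Next, since $\pi_1(W)$ is abelian by hypothesis, any homomorphism out of $\pi_1(Y_2)$ landing in $\pi_1(W)$ factors through $H_1(Y_2;\Z)=0$. Thus the map $\pi_1(Y_2)\to\pi_1(W)$ induced by inclusion is trivial, verifying hypothesis (2) of Theorem~\ref{T:top-ball-easy}.

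For hypothesis (1), I would note that $W$ is compact, so $\pi_1(W)$ is a finitely generated abelian group, and in particular has polynomial growth. Such groups are good in the sense of Freedman by the work of Freedman--Teichner (and Krushkal--Quinn) on groups of subexponential growth. With both hypotheses satisfied, Theorem~\ref{T:top-ball-easy} immediately produces a topological embedding of $B^3$ in $W$ with boundary $S$. The only genuinely substantive step is the first one, and that is a routine Mayer--Vietoris computation; there is no real obstacle here beyond assembling the pieces.
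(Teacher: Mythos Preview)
Your proof is correct and follows essentially the same approach as the paper: verify the hypotheses of Theorem~\ref{T:top-ball-easy} by noting that $Y_2$ is itself a homology sphere (so $\pi_1(Y_2)$ has trivial abelianization, killing the induced map into the abelian $\pi_1(W)$) and that abelian groups are good. The only cosmetic difference is that the paper invokes Freedman's original result that abelian groups are good directly, rather than routing through the later subexponential-growth theorem.
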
 
\begin{proof}
Since $Y$ is a homology sphere $S$ is null-homologous. In~\cite{freedman:goodgroups}, Freedman showed that abelian groups, including the trivial group, are `good'. The remaining hypothesis of Theorem~\ref{T:top-ball-easy} holds if $\pi_1(W)$ is abelian since any homomorphism from a perfect group to an abelian group is trivial. \end{proof}

In contrast, we have the following result in the smooth category. 

\begin{theorem}\label{T:smooth-ball} There exists a smooth simply connected $W$ with $Y=\partial W$ a homology sphere and $S\subseteq Y$ such that $S$ bounds a topologically embedded ball in $W$ but does not extend to a smoothly embedded ball in $W$.
\end{theorem}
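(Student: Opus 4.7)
The plan is to construct $W$ by modifying the trivial cobordism $\Sigma^\circ\times I$, where $\Sigma=\Sigma(2,3,5)$ is the Poincar\'e homology sphere, and to obstruct the smooth bounding of $S$ by combining Rokhlin's theorem with an intersection-form argument. Set $Y=\Sigma\conn(-\Sigma)$, an integral homology $3$-sphere with connect-sum sphere $S$; a topologically embedded ball bounded by $S$ in any simply connected filling of $Y$ is already provided by Corollary~\ref{C:top-ball-easy}. Start with $V_0=\Sigma^\circ\times I$, which is acyclic but satisfies $\pi_1(V_0)\cong\pi_1(\Sigma)\cong 2I^*$, the binary icosahedral group, generated by two elements $a$ and $b$. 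Choose disjoint embedded loops $\gamma_a,\gamma_b\subset V_0$ representing $a$ and $b$ (for instance at heights $1/3$ and $2/3$ in the $I$-factor), and perform $4$-dimensional interior surgery along each, replacing a tubular neighborhood $S^1\times D^3$ with $D^2\times S^2$. The resulting $W$ has $\partial W=Y$ (interior surgery preserves the boundary), is simply connected (since $a,b$ generate the finite group $\pi_1(\Sigma)$), and satisfies $b_2(W)=4$.

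Each such surgery along a null-homologous, non-null-homotopic loop contributes a hyperbolic $H$-block to the intersection form, as follows. A basis for $H_2(W)$ is given by the new $2$-spheres $S_a,S_b$, each of self-intersection $0$, together with the ``capped Seifert surfaces'' $F_a^{\mathrm{cl}},F_b^{\mathrm{cl}}$ obtained by pushing a Seifert $2$-chain for $\gamma_\bullet$ in $V_0$ into $V_0\setminus\nhd(\gamma_\bullet)$ and then capping the resulting boundary curve by the core disk $D^2\times\{\mathrm{pt}\}\subset D^2\times S^2$ of the surgery. With framings chosen so that $F_\bullet^{\mathrm{cl}}\cdot F_\bullet^{\mathrm{cl}}=0$ (arranged by a Wall-type finger move along $\gamma_\bullet$ if necessary, in the spirit of the construction in the proof of Theorem~\ref{T:nodehnforsphere}), one has $S_\bullet\cdot F_\bullet^{\mathrm{cl}}=1$, while the geometric disjointness of the two surgery regions forces the cross-intersections to vanish. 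Hence $Q_W\cong H\oplus H$, the rank-$4$ even unimodular form of signature zero.

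Now suppose for contradiction that $S$ bounds a smoothly embedded $3$-ball in $W$. By Lemma~\ref{L:boundaryconnectedsum}, $W=W_1\bconn W_2$ smoothly, with $\partial W_1=\Sigma$, $\partial W_2=-\Sigma$, and each $W_i$ smooth and simply connected, so $Q_{W_1}\oplus Q_{W_2}\cong H\oplus H$. Since $H$ is the unique rank-$2$ even unimodular form over $\Z$ and is indecomposable, the only orthogonal splittings of $H\oplus H$ are $(0,H\oplus H)$, $(H,H)$, and $(H\oplus H,0)$. In the extreme cases, one $W_i$ has $b_2=0$ and is therefore contractible; but since $\mu(\pm\Sigma)=1$, Rokhlin's theorem forbids a smooth contractible filling of $\pm\Sigma$, a contradiction. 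In the middle case, each $W_i$ is simply connected with even intersection form, hence spin, and $\sigma(W_i)=0$; but Rokhlin applied to $W_i$ with boundary $\pm\Sigma$ forces $\sigma(W_i)\equiv 8\mu(\pm\Sigma)\equiv 8\pmod{16}$, again a contradiction. Hence $S$ bounds no smoothly embedded ball in $W$. The main technical step is to verify the identification $Q_W\cong H\oplus H$, which amounts to careful bookkeeping of framings and self-linking numbers for the two interior surgeries.
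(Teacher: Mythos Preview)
Your proof is correct and follows the same strategy as the paper's first example: surger $P^\circ\times I$ (for $P$ the Poincar\'e sphere) to kill $\pi_1$, then use the Rokhlin invariant $\rho(\pm P)=1$ to rule out every possible splitting of the resulting even intersection form. The paper is slightly more economical: since $\pi_1(P)$ is \emph{normally} generated by a single element, one surgery suffices and yields $Q_W\cong H$ of rank~$2$, so the only candidate splittings are $(0,H)$ and $(H,0)$ and the contradiction is immediate; your two-generator, two-surgery version with $Q_W\cong H\oplus H$ works but the three-case analysis is avoidable overhead. One small correction on what you flag as the ``main technical step'': you cannot arrange $F_\bullet^{\mathrm{cl}}\cdot F_\bullet^{\mathrm{cl}}=0$ by a finger move, since framings of a circle in a $4$-manifold form a $\Z/2$-torsor and the two choices give self-intersections of opposite parity rather than values you can shift at will. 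The clean fix (and what the paper does) is simply to take the spin framing so that $Q_W$ is even; then each $2\times 2$ block $\left(\begin{smallmatrix}0&1\\1&2k\end{smallmatrix}\right)$ is isomorphic to $H$ for any even diagonal entry, and the off-diagonal blocks vanish by your disjoint-heights argument.
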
 
\begin{proof}We will build a simply connected spin 4-manifold $W$ with $\partial W=Y_1\conn_S Y_2$ where the $Y_i$ are oriented homology spheres. By Corollary~\ref{C:top-ball-easy}, $S$ bounds a topologically embedded ball in $W$. If $S$ bounds a smooth ball in $W$, by Lemma~\ref{L:boundaryconnectedsum}, $W$ decomposes along this ball as a boundary-connected sum $W=W_1\natural W_2$, where each $W_i$ is smooth, spin, and simply connected, with $Y_i=\partial W_i$, and moreover the intersection form splits as $Q_W\cong Q_{W_1}\oplus Q_{W_2}$. Thus, we can obstruct $S$ from bounding a smooth embedded ball in $W$ by choosing the $Q_W$ and the $Y_i$ wisely. We show how to proceed in the two specific cases below, and then construct concrete examples. \begin{enumerate}
\item $Q_W\cong H$, and $\rho(Y_1)=1$,
\item $Q_W\cong E_8\oplus H$, $\rho(Y_1)=1$, $\rho(Y_2)=0$, $Y_1$ is not the boundary of any smooth 4-manifold with intersection form $E_8$, and $Y_2$ is not the boundary of any acyclic smooth 4--manifold.
\end{enumerate}
In the first case, since $W=W_1\natural W_2$ where each $W_i$ is spin and simply connected with boundary a homology sphere, we see that $Q_{W_1}$ is either $0$ or $H$. This is a contradiction since $\rho(Y_1)=1$. In the second case, by the values of the Rohlin invariants, we see that the signature of $Q_{W_1}$ must be $8$ and the signature of $Q_{W_2}$ must be $0$. By the classification of indefinite even unimodular forms~\cite{milnor-husemoller}, there are two possibilities; either $Q_{W_1}\cong E_8$ and $Q_{W_2}\cong H$, or $Q_{W_1}\cong E_8\oplus H$ and $Q_{W_2}\cong 0$. Both possibilities are ruled out by hypothesis.

We finish the proof by giving concrete examples of manifolds satisfying the two situations mentioned above. Let $P$ denote the Poincar\'{e} homology sphere and $\gamma$ be a curve that normally generates $\pi_1(P)$. For the first case, let $W$ denote the $4$-manifold obtained by performing surgery on $P^\circ\times [0,1]$ along $\gamma$ pushed into the interior, with framing such that $W$ is spin. Then $W$ has the hyperbolic intersection form $H$ and is simply connected by construction with $\partial W=-P \# P$, where the connected-sum is along a sphere $S$. It is well-known that $\rho(\pm P)=1$. 

For the second case, let $Y_1$ be $P$ oriented as the boundary of $X$, the negative definite plumbing with form $E_8$.  Thus, $\rho(Y_1)=1$. If $Y_1$ were the boundary of a smooth $4$-manifold with intersection form $E_8$, we could construct a negative definite $4$-manifold with intersection form $E_8 \oplus E_8$, contradicting Donaldson's theorem.  Let $Y_2=P \conn P$, which is well-known to not bound an acyclic manifold~\cite{fs:pseudo}. Additionally, $\rho(Y_2)=0$. We construct $W$ as follows. Start with the boundary-connected sum of $X$ with $P^\circ \times I$. Let $S$ denote the boundary of $P^\circ$. Then the boundary of this manifold is $P \conn -P \conn P=-P \conn (P \conn P) = Y_1 \conn Y_2$. Now do surgery along $\gamma$ in $P^\circ$, pushed into the interior, with framing such that the resulting manifold $W$ is spin. By construction, $W$ is simply connected, and its intersection form is $E_8 \oplus H$, where $H$ is the hyperbolic intersection form; compare~\cite{scharlemann:phs}. \end{proof}

We note that while we focused on two specific cases in the above proof, the general principle likely applies to other choices of intersection forms and Rohlin invariants. 

A non-separating sphere which bounds a topological ball but no smooth ball is provided by Fintushel and Stern in~\cite{fs:fake}. They find a smooth $4$-manifold homeomorphic to $K3 \conn (\sss) \conn (S^1 \times S^3)$ that does not smoothly split off a copy of $S^1 \times S^3$. Removing a neighborhood of a circle generating the fundamental group gives rise to a smooth $4$-manifold with boundary $S^1 \times S^2$, where the $2$-sphere is null homotopic but does not bound a smooth $3$-ball. However, it does bound a topologically embedded $3$-ball. 

%%%%%%%%%%%%%%%%%%%%%%%%%%%%%%%%%%%%%%%%%%%%%%%%%%%%%
\section{Dehn's lemma for tori}\label{S:tori}
As before, throughout this section, $Y$ denotes the $3$-manifold boundary of a compact $4$-manifold $W$. We will use $T$ to denote an embedded torus in $Y$. 

Whenever we have a sphere in a 3--manifold, we may add a trivial handle to it to obtain a torus. This allows us to easily construct (compressible) tori which do not satisfy Dehn's lemma using our examples from the previous section. For instance, suppose that we start with a sphere $S$ in $Y$ which is nullhomotopic in $W$ but does not bound an embedded ball in $W$. Perform a trivial stabilization to obtain a torus $T$. Note that there are dual curves $\alpha$ and $\beta$ in $T$ both of which bound embedded disks in $Y$. Suppose such a $T$ bounds an embedded solid torus in $W$. Then either $\alpha$ or $\beta$ bounds an embedded disk in $W$. Use the disk in $Y$ bounded by the dual curve to obtain an embedded ball in $W$ bounded by a sphere isotopic to $S$, which is a contradiction. 

In order to avoid the above situation, we will restrict ourselves to incompressible tori in the rest of this section. We start by showing that an analogue for Dehn's lemma does not hold for incompressible tori in the boundary of contractible $4$-manifolds. 

\begin{theorem}\label{T:nodehnfortori} There exists a contractible $4$-manifold $W$ with an incompressible torus $T\subseteq Y$, such that $T$ extends to a map of a solid torus, but not to any topological embedding of a solid torus. 
\end{theorem}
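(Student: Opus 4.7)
The plan is to produce $Y=\partial W$ as a homology sphere with a splice (JSJ-type) decomposition along an incompressible torus $T$, and then obstruct $T$ from bounding any topological solid torus in $W$ via knot-slicing obstructions. Let $\Sigma_1, \Sigma_2$ be integer homology $3$-spheres and $K_i\subseteq\Sigma_i$ knots, with knot complements $Y_i=\Sigma_i\setminus\nu(K_i)$. Form the splice $Y=Y_1\cup_T Y_2$, gluing so that $\mu_{K_1}$ is identified with $\lambda_{K_2}$ and $\lambda_{K_1}$ with $\mu_{K_2}$ on $T$. Then $Y$ is an integer homology $3$-sphere with incompressible torus $T$. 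The $\Sigma_i$ and $K_i$ are to be chosen so that $Y$ bounds a smooth contractible $W$ and so that neither $K_i$ is topologically slice in any topological contractible $4$-manifold bounded by $\Sigma_i$. Since $\pi_1(W)=1$, every curve on $T$ is null-homotopic in $W$, so $T$ extends to a continuous map $S^1\times D^2\to W$ as required.

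Next I would suppose for contradiction that $V\subseteq W$ is a topologically embedded solid torus with $\partial V=T$ and meridional slope $\mu_V=p\mu_{K_1}+q\lambda_{K_1}$ on $T$. Cutting $W$ along $V$ yields $W=W_1\cup_V W_2$ with $\partial W_i=Y_i\cup_T V$, a Dehn filling of $Y_i$. Mayer--Vietoris applied with $H_*(W)=0$ and $V\simeq S^1$ gives $H_2(W_i)=H_3(W_i)=0$ together with $H_1(W_1)\oplus H_1(W_2)\cong\Z$. A van Kampen computation using $\pi_1(W)=1$ and $\pi_1(V)=\Z$ then pins down, in the WLOG case $H_1(W_1)=0$ and $H_1(W_2)=\Z$, that $\pi_1(W_1)=1$ and $\pi_1(W_2)=\Z$, so $W_1$ is topologically contractible and $W_2$ is a topological homotopy $S^1$. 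The long exact sequence of $(W_i,\partial W_i)$ with Poincar\'e--Lefschetz duality shows $\partial W_1$ is a homology sphere while $\partial W_2$ has $H_1\cong\Z$. The Dehn-filling constraints on the two sides, combined with the splice identification on $T$, force $\mu_V$ to be $\mu_{K_1}$ or $\mu_{K_2}$ (up to sign). Taking $\mu_V=\mu_{K_1}=\lambda_{K_2}$ gives $\partial W_1=\Sigma_1$ and $\partial W_2=\Sigma_2(K_2,0)$. Attaching a $2$-handle to $W_2$ along the dual knot $\hat K_2\subseteq\Sigma_2(K_2,0)$ with an appropriate framing yields a topological contractible $4$-manifold $W_2'$ with $\partial W_2'=\Sigma_2$, in which the cocore of the $2$-handle is a locally flat slice disk for $K_2$. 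This contradicts the hypothesis that $K_2$ is not topologically slice in any topological contractible bounded by $\Sigma_2$, and the symmetric case $\mu_V=\mu_{K_2}$ is ruled out analogously using $K_1$.

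The main obstacle is carrying out the construction: one must choose $\Sigma_i$ and $K_i$ so that the splice $Y$ bounds a smooth contractible $W$ while simultaneously ensuring neither $K_i$ is topologically slice in any topological contractible filling of $\Sigma_i$. The latter requires invariants that obstruct topological sliceness in all topological contractible bounding $4$-manifolds, not just $B^4$; Casson--Gordon-type invariants together with their refinements (e.g.\ the $L^2$-signature obstructions of Cochran--Orr--Teichner) are appropriate candidates when paired with bounding data for $\Sigma_i$. Concrete examples should arise from symmetric splices of carefully chosen knots, with $W$ built by a Mazur-type or doubling construction adapted to the splice structure.
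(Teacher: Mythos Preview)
Your proposal is not a proof: the theorem asserts existence, and you explicitly leave the construction of $(\Sigma_i,K_i,W)$ undone, calling it ``the main obstacle.'' Saying that ``concrete examples should arise'' from Mazur-type or doubling constructions, and that Casson--Gordon or $L^2$-signature obstructions are ``appropriate candidates,'' does not establish that any such data exists with all the required properties simultaneously. By contrast, the paper produces a completely explicit $W$ via a two-component handlebody (a dotted ribbon knot $J\#-J$ together with a $2$-handle along a knot $K$ with framing $n$), verifies incompressibility of $T$ using Gabai's surgery-in-solid-tori theorem and an innermost-disk argument, and then obstructs $T$ from bounding a solid torus directly: Hoste's linking formula shows the only self-linking-zero slopes on $T$ are $[\beta]$ and $[\alpha]+n[\beta]$, and for suitable $J,K$ (e.g.\ both trefoils) neither representative is algebraically slice, via an explicit Seifert-surface and signature computation. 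No cutting of $W$ along a hypothetical $V$ is needed.

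There is also a genuine gap in your cutting argument. From $\pi_1(W_1)\ast_{\Z}\pi_1(W_2)=1$ together with $H_1(W_1)=0$ and $H_1(W_2)\cong\Z$ you assert $\pi_1(W_1)=1$ and $\pi_1(W_2)\cong\Z$; this does not follow. Triviality of an amalgamated product only forces each factor to be normally generated by the image of the edge group, which is compatible with $\pi_1(W_1)$ being a nontrivial perfect group normally generated by one element. The easy fix is to abandon ``contractible'' for ``homology ball'': your Mayer--Vietoris computation already shows that $W_2'=W_2\cup(\text{$2$-handle})$ is an integral homology $4$-ball with $\partial W_2'=\Sigma_2$ and a locally flat slice disk for $K_2$, and classical signatures obstruct sliceness in \emph{any} homology ball, which is exactly the level at which the paper's obstruction operates. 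With that correction your outline is coherent, but it still reduces the problem to producing the example---and that is precisely what you have not done.
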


\begin{proof}
Let $W$ denote the $4$-manifold given by the handle diagram in Figure~\ref{F:generaltoruslooptheorem}; this is inspired by~\cite[Figure 2]{akbulut:infinite} (reproduced in Figure~\ref{F:toruslooptheorem}). The dotted ribbon knot notation is described in detail in~\cite[p 213]{GS99}; see Figures~\ref{F:nosmoothsolidtorus1} and \ref{F:nosmoothsolidtorus2} for diagrams using the more common notation allowed only dotted unknots, in the specific case where $J$ is the left-handed trefoil. Briefly, the dotted ribbon knot $J\# -J$ in Figure~\ref{F:generaltoruslooptheorem} indicates that we take the complement of the canonical ribbon disk for $J\# -J$.

\begin{figure}[htb]
  \centering
    \labellist
    \normalsize\hair 0mm
    \pinlabel {$J$} at 190 55
    \pinlabel {$-J$} at 25 55
    \pinlabel {$K$} at 132 132
    \pinlabel {$n$} at 125 162
    \pinlabel {$T$} at 30 100
		\pinlabel {$T'$} at 150 152
    \pinlabel {$\alpha$} at 83 107
    \pinlabel {$\beta$} at 60 80    
    \pinlabel {$L_1$} at 30 25
    \pinlabel {$L_2$} at 112 135    
      \endlabellist
  \includegraphics[scale = 1]{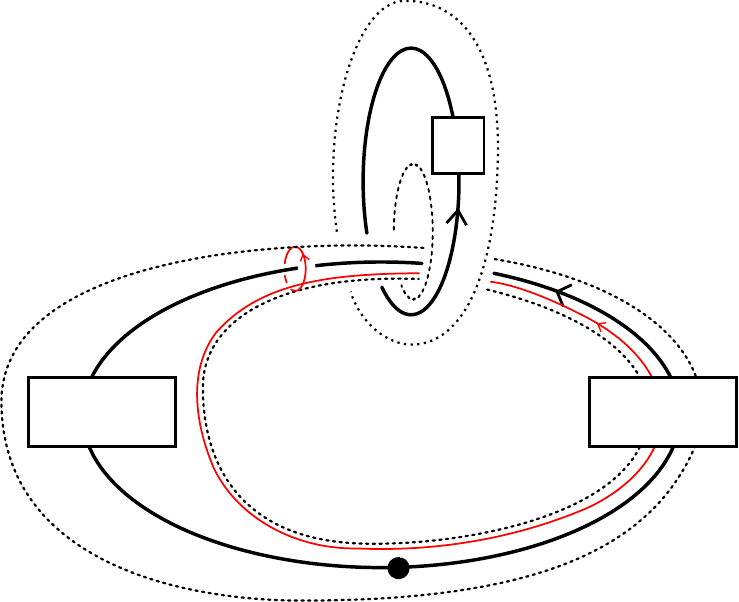}
  \caption{A torus $T$ (shown with dotted lines) embedded in the boundary of a contractible $4$-manifold $W$, which is given by the link $L_1 \sqcup L_2$. Here $n$ is an integer and $K$ and $J$ are knots. Choose $J$ to be a non-trivial knot. A strand passing vertically through a box labeled by a knot has that knot type; multiple curves passing through a box are 0--framed parallels. Two curves on the torus $T$, a meridian $\alpha$ and a longitude $\beta$ are shown in red. The torus $T'$ is also shown with dotted lines.}\label{F:generaltoruslooptheorem}
\end{figure}
\begin{figure}[htb]
  \centering
    \labellist
    \normalsize\hair 0mm
    \pinlabel {$-1$} at 202 180
    \pinlabel {$T$} at 100 137
      \endlabellist
  \includegraphics[scale = .8]{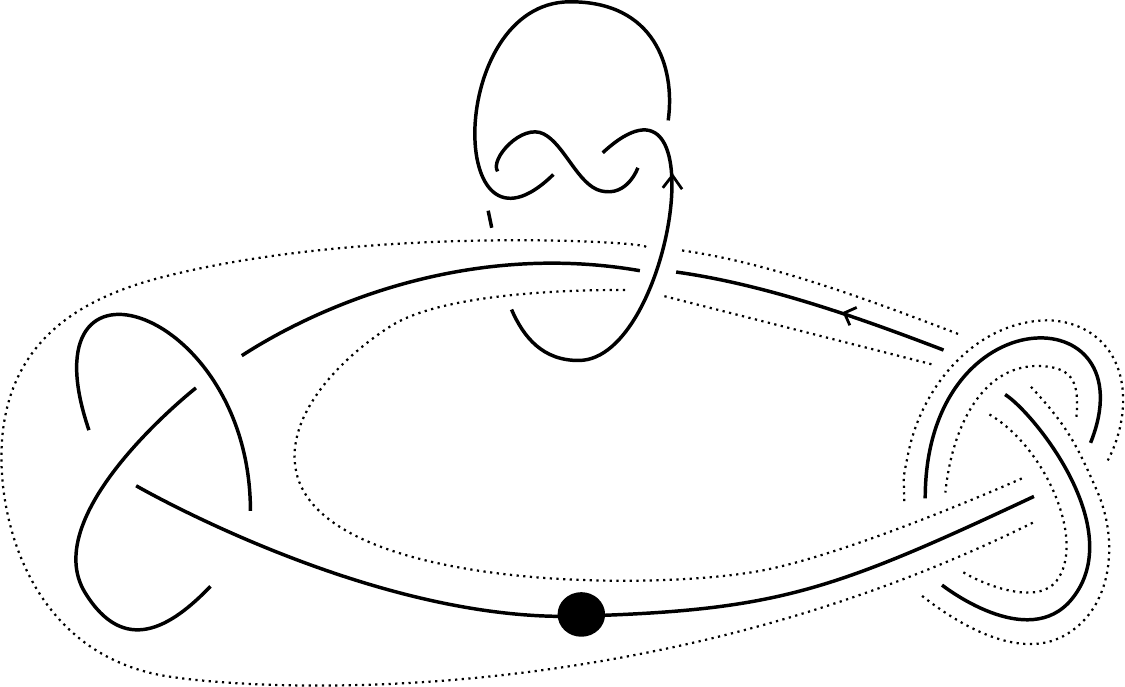}
  \caption{A torus $T$ (shown with dotted lines) embedded in the boundary of a contractible $4$-manifold. }\label{F:toruslooptheorem}
\end{figure}

A straightforward $\pi_1$ computation shows that $W$ is contractible and consequently, $Y$ is a homology sphere. Thus the inclusion of the torus $T$, shown in the figure, extends to a map of a solid torus in~$W$. 

We first show that $T$ is incompressible in $Y$. For the moment, assume that $K$ is a non-trivial knot. We will show that the torus $T'$ in Figure~\ref{F:generaltoruslooptheorem} is incompressible in $Y$ and that $Y$ is irreducible. The torus $T'$ separates $Y$ into two components, $X_1$ and $X_2$, each of which is obtained by performing surgery on a knot in a solid torus (recall that we get a picture for $Y$ by replacing the dot on $L_1$ with a $0$). In this situation,~\cite[Theorem~1.1]{gabai:surgery-in-solid-tori} states that there are three possibilities for each $X_i$, as follows. The first possibility is that $X_i$ is a solid torus. However, we know from~\cite[Lemma~2]{bing-martin:cubes-knotted-holes} that if a knot intersects a generic meridional disk of the solid torus containing it exactly once, the result of surgery is not even a homotopy solid torus. Thus, since our knots satisfy the intersection condition, $X_i$ cannot be a solid torus. The second option is that $X_i$ is a connected sum with a closed manifold with non-trivial $H_1$. This is impossible since $X_i$ is contained in the homology sphere $Y$. This leaves the last possibility, where each $X_i$ is irreducible with incompressible boundary. Thus, $T'$ is incompressible in $Y$ and $Y$ is irreducible. 

Now we return to the torus $T$, which separates $Y$ into two components, one of which contains $L_2$. Let $V$ be the component that does not contain $L_2$. Note that $V$ is once again the result of surgery on a knot in a solid torus, and by the same argument as above, $T$ is incompressible in $V$. Thus, if $T$ is compressible in $Y$, it must be compressible in $Y-V$, i.e.\ there must be an essential curve on $T$ which bounds a disk $\Delta$ in $Y-V$. Since $T'$ is incompressible in $Y$ and $Y$ is irreducible, we can use a standard innermost disk argument (given next) to conclude that $\Delta$ may be chosen to not intersect $T'$. Perform an isotopy to ensure that $\Delta$ and $T'$ are transverse. Thus, the two surfaces intersect in a collection of disjoint circles. Choose an innermost such circle on $\Delta$, i.e.\ a circle $C$ that bounds a disk on $\Delta$. Since $T'$ is incompressible, this circle bounds a disk on $T'$ as well. The union of these two disks form an embedded sphere in $Y$, which bounds a ball since $Y$ is irreducible. Use this ball to isotope $\Delta$ away from $T'$ so that there is one fewer circle of intersection between $\Delta$ and $T'$. (Alternatively, simply throw away the disk bounded by $C$ in $\Delta$ and replace it by the disk bounded by $C$ in $T'$. While this does not require $Y$ to be irreducible, the resulting disk may not be isotopic to the one we started out with.) By induction, we may assume that $\Delta$ and $T'$ are disjoint. Consequently, there must be an essential curve in the boundary of $S^3-J$ which bounds a disk in $S^3-J$. This is impossible since $J$ is a non-trivial knot and thus, $T$ is incompressible in $Y$. Note that this argument cannot possibly work when $K$ is the unknot since then $T'$ is clearly compressible. 

Now assume that $K$ is the unknot and $n\neq 0$. We will show that the torus $T$ is isotopic to the torus $T_0$, shown on the right of Figure~\ref{F:twotori}. This will complete the proof that $T$ is incompressible in $Y$ since we may perform the slam-dunk move~\cite[p 163]{GS99} on $L_2$ to see that $T_0$ is a standard `swallow-follow' torus in the $\frac{-1}{n}$ surgery on $L_1$, which we know is incompressible by~\cite[Lemma~7.1]{gordon:incompressible}.
\begin{figure}[htb]
  \centering
    \labellist
    \normalsize\hair 0mm
    \pinlabel {$J$} at 190 55
    \pinlabel {$-J$} at 30 55
		\pinlabel {$J$} at 410 55
    \pinlabel {$-J$} at 250 55
    \pinlabel {$L_2$} at 110 120
    \pinlabel {$n$} at 130 115
		\pinlabel {$L_2$} at 325 110
    \pinlabel {$n$} at 350 110
    \pinlabel {$T$} at 30 105
		\pinlabel {$T_0$} at 250 105
    \pinlabel {$0$} at 45 92
		\pinlabel {$0$} at 265 92
%    \pinlabel {$\alpha$} at 83 107
%    \pinlabel {$\beta$} at 60 80    
      \endlabellist
  \includegraphics[width=\textwidth]{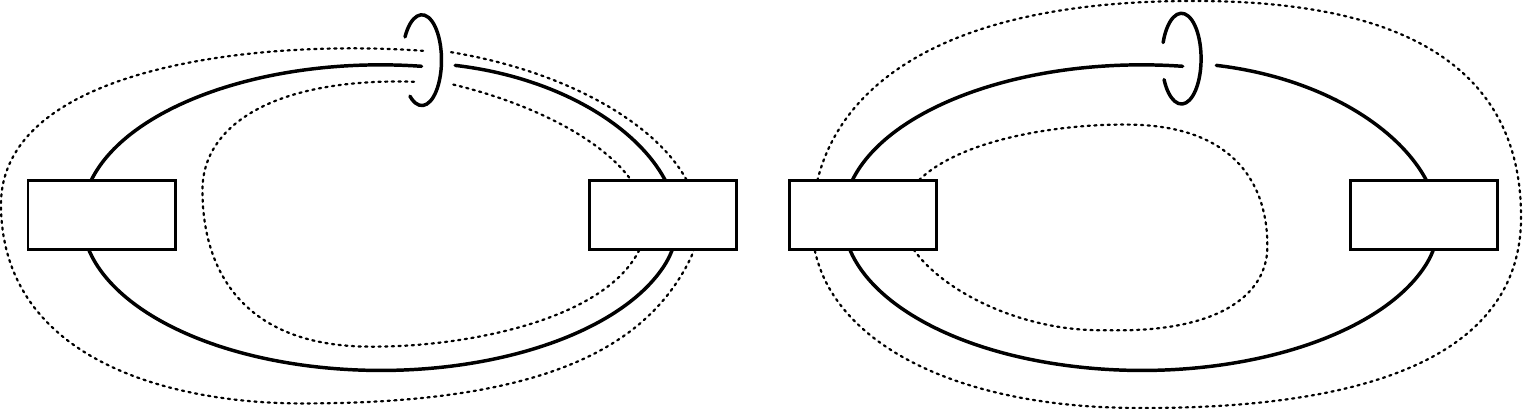}
 \caption{Tori $T$ (left) and $T_0$ (right) in the $3$-manifold $Y$. We claim these are isotopic.}\label{F:twotori}
 \end{figure}

The isotopy from $T$ to $T_0$ follows the standard proof of the slam-dunk move on diagrams for $3$-manifolds, in which $L_2$ is pushed into the surgery torus $N$ for the indicated $0$-surgery on $L_1$. The initial situation, before perfoming $0$-surgery on $L_1$, is shown in Figure~\ref{F:initialT}.  

\begin{figure}[h]
  \centering
    \labellist
    \normalsize\hair 0mm
    \pinlabel {$J$} at 195 55
    \pinlabel {$-J$} at 30 55
    \pinlabel {$L_2$} at 109 115
    \pinlabel {$n$} at 130 110
    \pinlabel {$T$} at 0 80
    \pinlabel {$\partial\nu(L_1)$} at 70 80
%\pinlabel {$T_0$} at 38 127
%    \pinlabel {$0$} at 45 95
%    \pinlabel {$\alpha$} at 83 107
%    \pinlabel {$\beta$} at 60 80    
      \endlabellist
  \includegraphics[scale = 1]{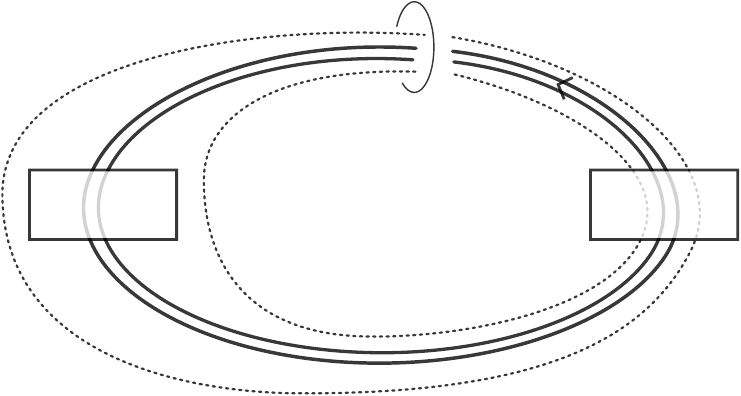}
 \caption{Initial position, showing $L_2$ and $T$ in $Y$ before the isotopy. The dark lines indicate the boundary of a tubular neighborhood of $L_1$.}\label{F:initialT}
\end{figure}
Before we perform $0$-surgery on $L_1$, there is a clear isotopy that pushes $L_2$ and a portion of $T$ into the neighborhood $\nu(L_1)$ indicated in Figure~\ref{F:Kdetail}. Starting from the initial position in Figure~\ref{F:initialT}, perform $0$-surgery on $L_1$. This involves removing $\nu(L_1)$ and gluing in a solid torus $N$ so that $\mu_N \to \lambda_\nu$ and $\lambda_N \to \mu_\nu$ (up to sign). In this surgered manifold, perform an isotopy corresponding to the one discussed above (Figure~\ref{F:Kdetail}), i.e.\ push $L_2$ and a portion of $T$ into $N$. We see that $L_2$ is a core curve of $N$; the portion of $T$ inside $N$ is an annulus $A$ and the portion outside is $A'$. Figure~\ref{F:KandAinN} shows $N$ with $L_2$ and $A$ inside.
 \begin{figure}[h]
 \centering
    \labellist
    \normalsize\hair 0mm
    \pinlabel {$L_2$} at 60 30
     \pinlabel {$A$} at 98 32
    \pinlabel {$T$} at -5 40
    \pinlabel {$\nu(L_1)$} at 15 25   
      \endlabellist
  \includegraphics[scale = 1.3]{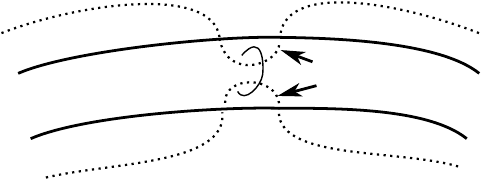}
 \caption{Detail showing $L_2$ pushed into $\nu(L_1)$, with an annular portion $A$ of $T$ pushed in there too.}\label{F:Kdetail}
\end{figure}
\begin{figure}[htb]
  \centering
    \labellist
    \normalsize\hair 0mm
    \pinlabel {$\partial \nu$} at 30 105
		\pinlabel {$A$} at 52 100
    \pinlabel {$L_2$} at 43 88
      \endlabellist
  \includegraphics[scale = 1.2]{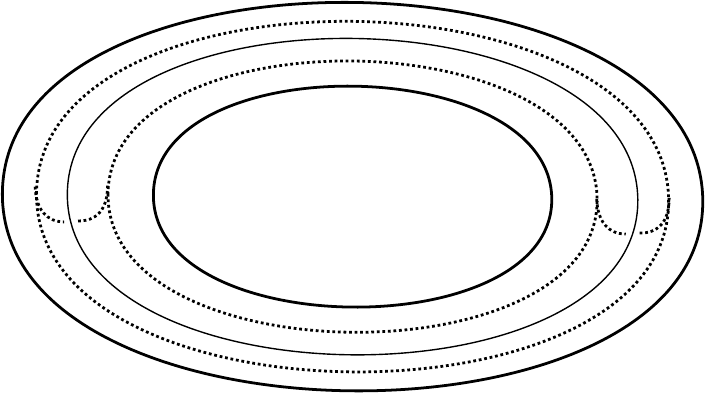}
 \caption{A ``top-down'' view of the solid torus $N$ with the annulus $A$ within. Push the interior of $A$ into the page while keeping the boundary fixed.}\label{F:KandAinN}
 \end{figure}
Within $N$, push the interior of $A$ down to $\partial N$ away from $L_2$, while keeping the boundary fixed. In the original picture (Figure~\ref{F:initialT}), this new annulus is obtained from $\partial\nu$ by removing an annular neighborhood of a meridian, i.e.\ we have isotoped $T$ to the torus shown in Figure~\ref{F:secretlyswallowfollow}, which is clearly seen to be the torus $T_0$ from Figure~\ref{F:twotori}. 
\begin{figure}[h]
  \centering
    \labellist
    \normalsize\hair 0mm
    \pinlabel {$J$} at 195 55
    \pinlabel {$-J$} at 30 55
    \pinlabel {$L_2$} at 110 115
    \pinlabel {$n$} at 128 110 
      \endlabellist
  \includegraphics[scale = 1]{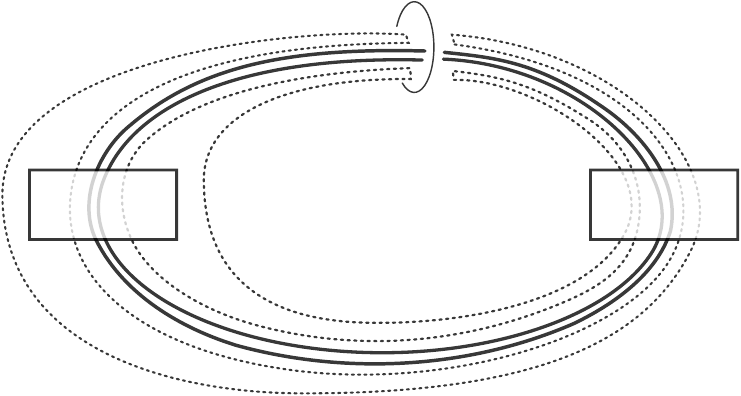}
 \caption{The result of performing an isotopy on $T$.}\label{F:secretlyswallowfollow}
\end{figure}

Thus, the torus $T$ is incompressible in $Y$, except for the single case when $K$ is the unknot and $n=0$, where it is easy to see that $T$ is compressible.

If $T$ bounds a properly embedded solid torus in $W$, there must be some essential curve on $T$, denoted $\gamma$, with zero linking with its pushoff on $T$, that bounds a properly embedded disk in $W$, i.e.\ is slice in $W$. Our first step is thus to determine which essential curves on $T$ have zero self-linking in $Y$. Note that $Y$ is obtained from $S^3$ by performing surgery on the link $\{L_1,L_2\}$. The linking matrix for $\{L_1,L_2\}$ is $ B=\left[ \begin{array}{cc}
0 & 1 \\
1 & n \end{array} \right]$. Let $\lk_{S^3}(\cdot, \cdot)$ and $\lk_{Y}(\cdot, \cdot)$ denote the linking number of curves in $S^3$ and $Y$ respectively. We use Hoste's formula from~\cite{Hoste86} for the linking number of homologically trivial curves in a $3$-manifold $Y$ given by a surgery diagram, which states that 
$$\lk_Y(\sigma,\eta)=\lk_{S^3}(\sigma, \eta) - \left[ \begin{array}{cc} \lk_{S^3}(\sigma,L_1) & \lk_{S^3}(\sigma,L_2) \end{array} \right] B^{-1} \left[ \begin{array}{cc} \lk_{S^3}(\eta,L_1) & \lk_{S^3}(\eta,L_2) \end{array} \right]^T$$
for any two curves $\sigma$ and $\eta$ in $Y$. We compute that $\lk_{S^3}(\alpha, L_1)=1$, $\lk_{S^3}(\alpha, L_2)=0$, $\lk_{S^3}(\beta, L_1)=0$, and $\lk_{S^3}(\beta, L_2)=1$. Additionally, the linking of $\alpha$ with its tangential pushoff, $\lk_{S^3}(\alpha, \alpha^+)=0$, and the linking of $\beta$ with its tangential pushoff, $\lk_{S^3}(\beta, \beta^+)=0$. Moreover, $\lk_{S^3}(\alpha, \beta^+)=0$ and $\lk_{S^3}(\beta, \alpha^+)=1$. Using this information, it is straightforward to apply the above formula to see that $x,y \in\Z$, the linking in $Y$ of a curve $\gamma$ in the homology class $x[\alpha]+y[\beta]$ with its tangential pushoff is given by $nx^2-xy$. Thus, the homology class of $\gamma$ is (up to an irrelevant sign) either $[\beta]$ or $[\alpha]+n[\beta]$. Since $T$ is a torus, there is a unique free homotopy class of curves associated with each primitive homology class. In the remainider of the proof, we will choose representatives of $[\beta]$ and $[\alpha]+n[\beta]$ and show that neither is slice in $W$.

\begin{figure}[htb]
  \centering
    \labellist
    \normalsize\hair 0mm
    \pinlabel {$J$} at 190 55
    \pinlabel {$-J$} at 25 55
    \pinlabel {$K$} at 137 130
    \pinlabel {$n$} at 132 155
    \pinlabel {$\beta'$} at 82 60    
    \pinlabel {$L_1$} at 30 25
    \pinlabel {$L_2$} at 102 155    
      \endlabellist
  \includegraphics[scale = 1]{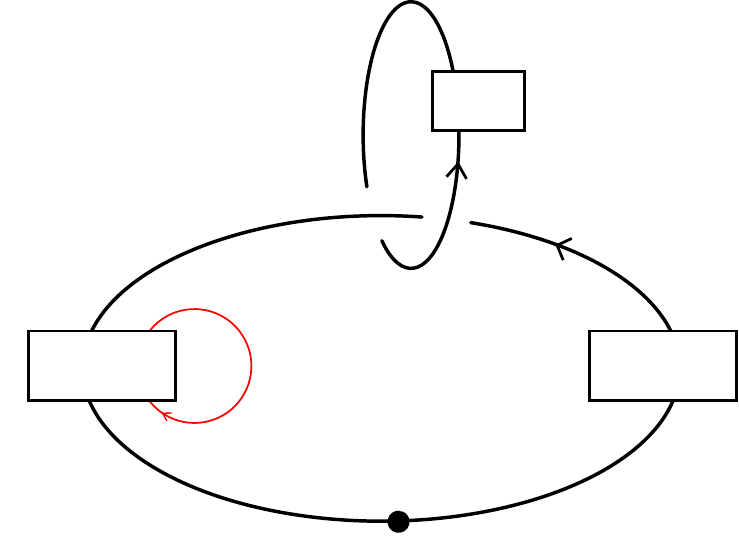}
  \caption{The curve $\beta'$ is isotopic in $Y$ to $\beta$ from Figure~\ref{F:generaltoruslooptheorem}.}\label{F:betaprime}
\end{figure}

Let $\gamma =\beta$. By sliding $\beta$ under (the reverse of) $L_1$, we see that $\beta$ is isotopic in $Y$ to a curve $\beta'$ that has zero linking number with both $L_1$ and $L_2$ (see Figure~\ref{F:betaprime}) and moreover, we can see a Seifert surface for this curve, namely we can use one for $-J$. The curves on this Seifert surface have zero linking with $L_1$ and $L_2$, and thus, by Hostes's formula, their linking numbers in $Y$ are the same as in $S^3$. Then, as long as we choose $J$ such that $-J$ has, say, non-zero signature as a knot in $S^3$, $\gamma$ is not slice in $W$. Here, all we use is that the signature can be computed from the Seifert matrix (from above, this will be the same as a Seifert matrix for $J$ in $S^3$) and obstructs sliceness in a homology ball. 

\begin{figure}[htb]
  \centering
    \labellist
    \normalsize\hair 0mm
    \pinlabel {$J$} at 190 55
    \pinlabel {$-J$} at 25 55
    \pinlabel {$K$} at 137 130
    \pinlabel {$n$} at 132 155
    \pinlabel {$\gamma'$} at 172 130    
    \pinlabel {$L_1$} at 30 25
    \pinlabel {$L_2$} at 102 155    
      \endlabellist
  \includegraphics[scale = 1]{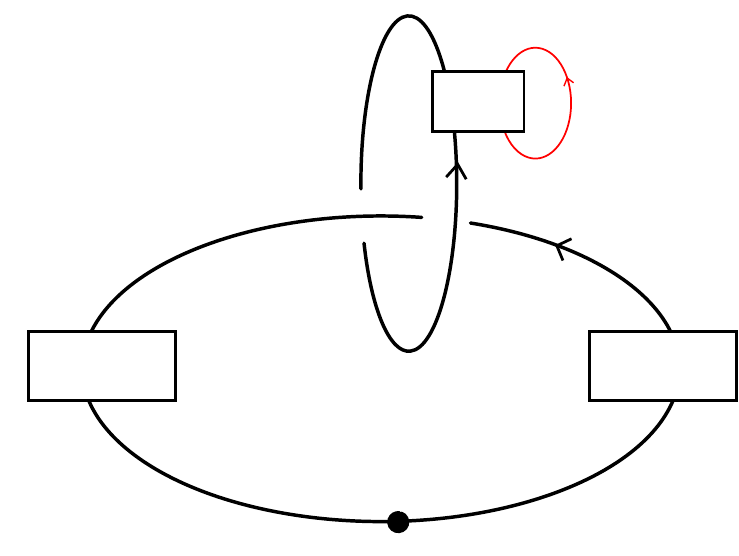}
  \caption{The curve $\alpha$ from Figure~\ref{F:generaltoruslooptheorem} is isotopic in $Y$ to $\gamma'$.}\label{F:alphaprime}
\end{figure}
\begin{figure}[htb]
  \centering
    \labellist
    \normalsize\hair 0mm
    \pinlabel {$J$} at 185 55
    \pinlabel {$-J$} at 20 55
    \pinlabel {$K$} at 130 132
    \pinlabel {$3$} at 125 155
    \pinlabel {$\gamma$} at 150 65    
    \pinlabel {$L_1$} at 20 25
    \pinlabel {$L_2$} at 90 155    
		\pinlabel {$J$} at 430 55
    \pinlabel {$-J$} at 265 55
    \pinlabel {$K$} at 375 132
    \pinlabel {$-3$} at 370 155
    \pinlabel {$\gamma$} at 395 70    
    \pinlabel {$L_1$} at 270 25
    \pinlabel {$L_2$} at 340 155  
		\pinlabel {$n>0$} at 100 -5
		\pinlabel {$n<0$} at 345 -5
      \endlabellist
  \includegraphics[scale = 0.8]{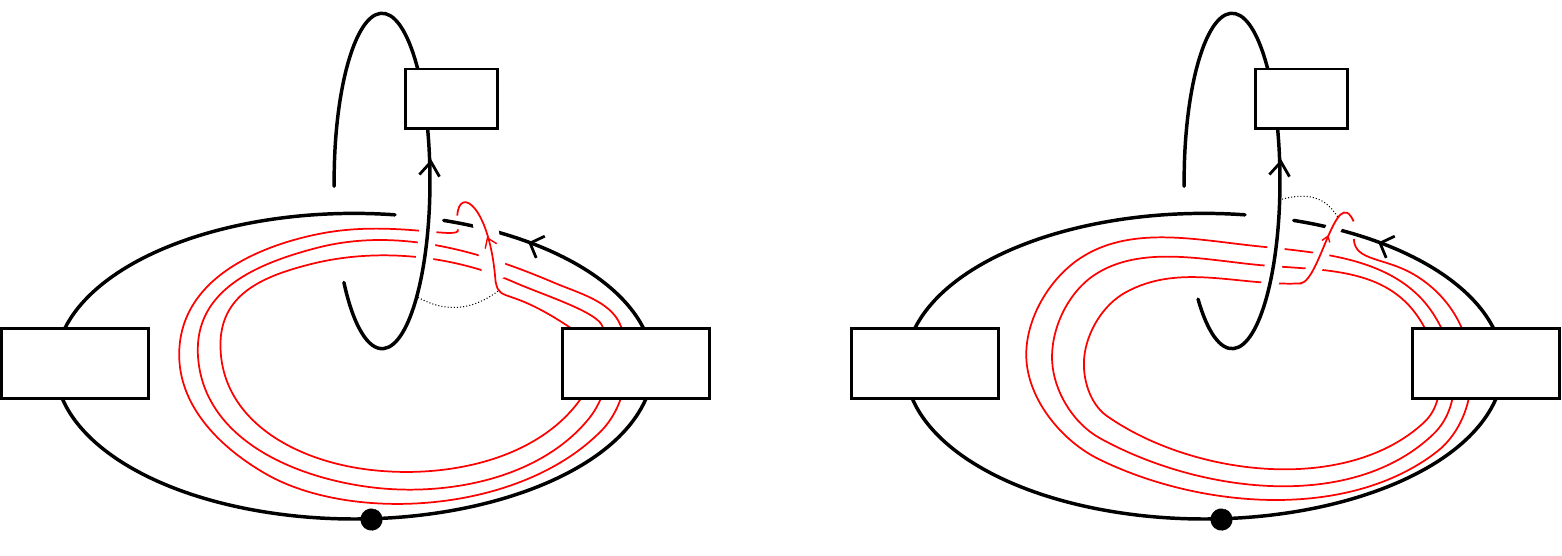}
  \caption{The specific cases of $n=3$ and $n=-3$ are shown. For other values of $n$, $\gamma$ would wind around the box labeled $J$ and through $L_2$ $n$ times. }\label{F:gamma}
\end{figure}

\begin{figure}[htb]
  \centering
    \labellist
    \normalsize\hair 0mm
    \pinlabel {$J$} at 185 55
    \pinlabel {$-J$} at 20 55
    \pinlabel {$K$} at 130 132
    \pinlabel {$3$} at 127 112		
    \pinlabel {$3$} at 120 157
    \pinlabel {$\gamma'$} at 150 63    
    \pinlabel {$L_1$} at 20 25
    \pinlabel {$L_2$} at 90 155    
		\pinlabel {$J$} at 430 55
    \pinlabel {$-J$} at 265 55
    \pinlabel {$K$} at 375 132
		\pinlabel {$-3$} at 372 111	
    \pinlabel {$-3$} at 367 157
    \pinlabel {$\gamma'$} at 395 68    
    \pinlabel {$L_1$} at 270 25
    \pinlabel {$L_2$} at 340 155  
		\pinlabel {$n>0$} at 100 -5
		\pinlabel {$n<0$} at 345 -5
      \endlabellist
  \includegraphics[scale = 0.8]{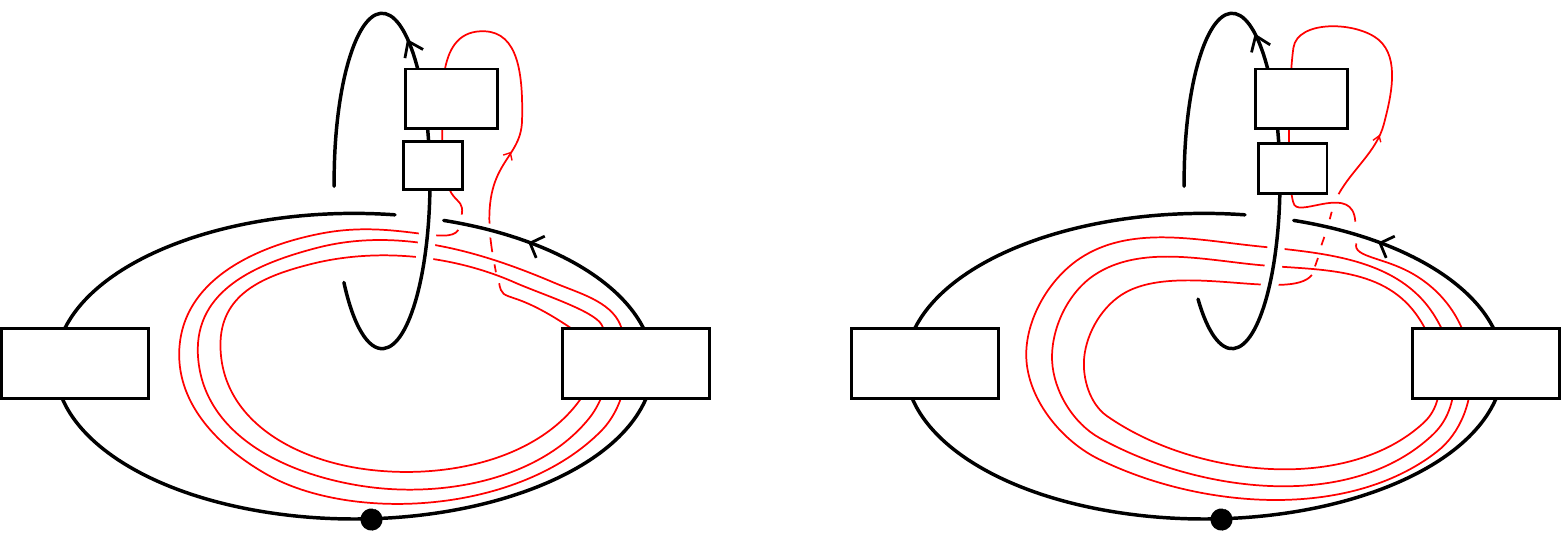}
  \caption{The specific cases of $n=3$ and $n=-3$ are shown. The box labeled with $3$ (or $n>0$ in the general case) indicates 3 (or $n$) right-handed twists. The box labeled with $-3$ (or $n<0$ in the general case) indicates $3$ (or $n$) left-handed twists.}\label{F:gammaprime}
\end{figure}

\begin{figure}[htb]
  \centering
    \labellist
    \normalsize\hair 0mm
    \pinlabel {$J$} at 185 55
    \pinlabel {$-J$} at 20 55
    \pinlabel {$K$} at 128 142
    \pinlabel {$3$} at 126 112		
    \pinlabel {$3$} at 120 159
    \pinlabel {$\gamma'_1$} at 165 142  
    \pinlabel {$\gamma'_2$} at 60 40 
		\pinlabel {$\gamma'_3$} at 145 28 
    \pinlabel {$L_1$} at 20 25
    \pinlabel {$L_2$} at 90 155    
		\pinlabel {$J$} at 430 55
    \pinlabel {$-J$} at 265 55
    \pinlabel {$K$} at 373 142
		\pinlabel {$-3$} at 371 111	
    \pinlabel {$-3$} at 367 157   
    \pinlabel {$L_1$} at 270 25
    \pinlabel {$L_2$} at 340 155  
		\pinlabel {$n>0$} at 100 -5
		\pinlabel {$n<0$} at 345 -5
    \pinlabel {$\gamma'_1$} at 410 142  
    \pinlabel {$\gamma'_2$} at 310 40 
		\pinlabel {$\gamma'_3$} at 387 28 
      \endlabellist
  \includegraphics[scale = 0.8]{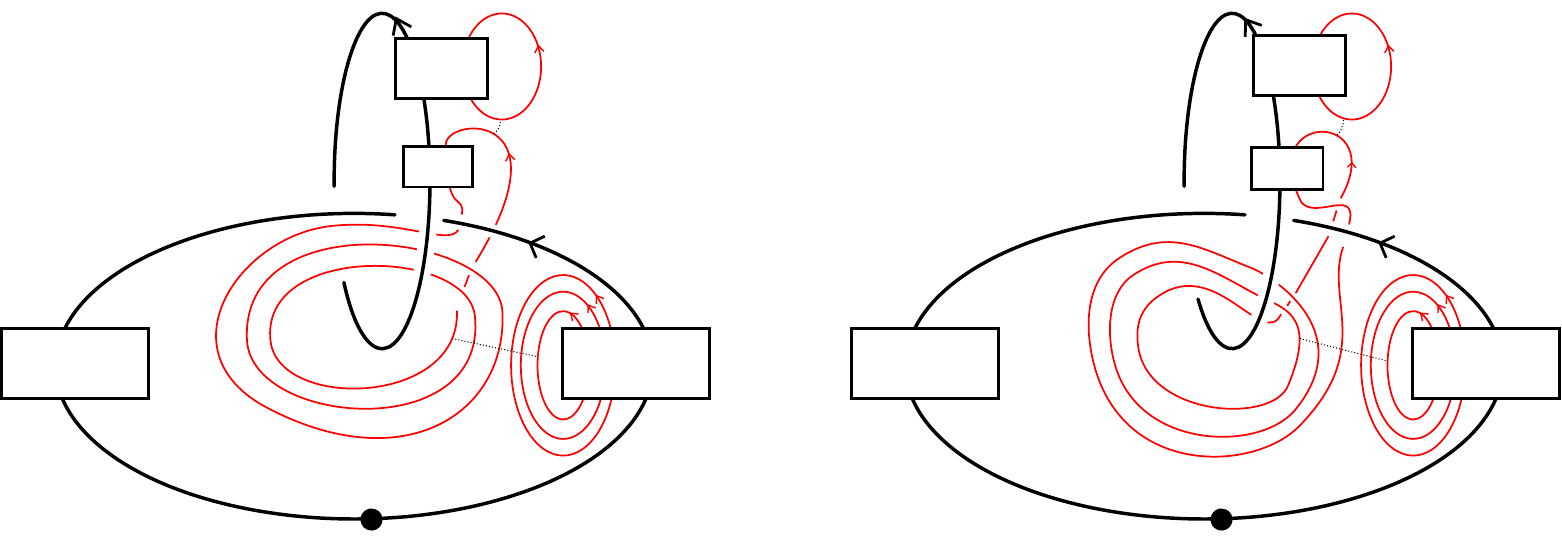}
  \caption{The specific cases of $n=3$ and $n=-3$ are shown. The box labeled with $3$ (or $n>0$ in the general case) indicates $3$ (or $n$) right-handed twists.}\label{F:gammaprimepieces}
\end{figure}

Now suppose $\gamma$ is a curve with homology class $[\alpha]+n[\beta]$. When $n=0$, this class is represented by the curve $\alpha$. By sliding $\alpha$ over (the reverse of) $L_2$, we see that $\alpha$ is isotopic in $Y$ to a curve $\gamma'$ indicated in Figure~\ref{F:alphaprime}. As in the previous case, we see a Seifert surface for $\gamma'$ given by a parallel pushoff of one for $K$, such that the curves on the surface do not link with $L$. As a result, if we choose $K$ to be a knot with say, non-zero signature as a knot in $S^3$, $\gamma$ is not slice in $W$. The $n\neq 0$ case is similar, but takes a bit more work. Depending on whether $n$ is positive or negative, we let $\gamma$ be the curve shown in Figure~\ref{F:gamma}, where it is easy to see that $\gamma$ has homology class $[\alpha]+n[\beta]$. 
Slide $\gamma$ over (the reverse of) $L_2$ along the dotted paths shown in the figure to obtain the curve $\gamma'$ shown in Figure~\ref{F:gammaprime}. A Seifert surface for $\gamma'$ can be constructed using $n$ parallel (disjoint) copies of a Seifert surface for $J$ and a copy of a Seifert surface for $K$. We show this in Figure~\ref{F:gammaprimepieces} and describe the procedure next. Here we draw a curve $\gamma'_1$, an unknot $\gamma'_2$ which is unlinked from all other curves, and an $n$--component link $\gamma'_3$. The components of $\gamma'_3$ bound a collection of $n$ disjoint parallel copies of a Seifert surface for $J$. The curve $\gamma'_2$ bounds a disk in $Y$ away from all other curves and surfaces. The curve $\gamma'_1$ bounds a parallel copy of a Seifert surface for $K$. The curve $\gamma'$ can be obtained from $\gamma'_1$, $\gamma'_2$ and $\gamma'_3$ by band summing along the dotted lines shown in the figure (for $\gamma'_2$ and $\gamma'_3$ use a collection of $n$ nested bands); we can use the same bands to fuse the various surfaces together to get a Seifert surface for $\gamma'$. Note that the curves on this surface do not interact with either $L_1$ or $L_2$ and therefore, in algebraic concordance, $\gamma'$ corresponds to the class of $K\conn J_{n,1}$ in $S^3$, where $J_{n,1}$ is the $(n,1)$--cable of $J$. 

From the above argument, we see that if $K$ and $J$ are chosen such that neither $-J$ nor $K\#J_{n,1}$ is algebraically slice, then $T$ does not extend to an embedded solid torus. For example, we could choose $K$ and $J$ to both be left- or right-handed trefoils. \end{proof}

The above result shows that the failure of certain `surgery curves' $\gamma$ lying on $T$ to be slice in $W$ gives rise to obstructions to $T$ bounding a solid torus in $W$. We now give a converse result, showing that under certain reasonable conditions, the sliceness of such curves is the complete obstruction.  

Recall that $Y$ denotes the $3$-manifold boundary of a compact $4$-manifold $W$, and $T\subseteq Y$ is an embedded torus. 

\begin{proposition}\label{P:embed} Let $T$ be separating and $\gamma \subset T$ be an embedded circle. Let $e$ be the framing of $\gamma$ in $Y$ determined by the pushoff of $\gamma$ in $T$. If 
\begin{enumerate}
\item $[\gamma] \neq 0 \in H_1(T)$,
\item $\gamma$ is smoothly (resp. topologically) slice in $W$ with respect to $e$ (i.e.\ $\gamma$ and its pushoff in $T$ bound disjoint slice disks in $W$), and
\item the surgered manifold $Y_e(\gamma)$ is irreducible,
\end{enumerate}
then $T$ bounds a smoothly (resp. topologically) embedded solid torus in $W$.
\end{proposition}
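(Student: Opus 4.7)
The plan is to build $V$ as the union of two three-balls glued along two disjoint disks in their boundaries, which is a standard description of a solid torus.

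The first three-ball will come from hypothesis~(2). The disjoint slice disks $D, D'$ together determine a trivialization of the normal bundle of $D$ in $W$ extending the framing $e$ on $\partial D = \gamma$, so that a tubular neighborhood of $D$ may be identified with $\nu(D) = D \times D^2$ in such a way that $\nu(D) \cap Y = N(\gamma) = \gamma \times D^2$ is a tubular neighborhood of $\gamma$ in $Y$, and so that $T \cap N(\gamma) = \gamma \times I_0$ for a chosen diameter $I_0 \subset D^2$. Set $A' := T \cap N(\gamma)$ and $A := T \setminus \inte(A')$; by hypothesis~(1), $A$ is a single (connected) annulus on $T$. Then the thickened slice disk $B_0 := D \times I_0 \subset \nu(D)$ is a three-ball in $W$ whose boundary is $A' \cup D_+ \cup D_-$, where $D_\pm := D \times \{\text{endpoints of } I_0\}$ are parallel copies of $D$ whose boundaries are the two components $\gamma_\pm$ of $\partial A$.

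The second three-ball will come from hypothesis~(3). I will observe that $\partial(W \setminus \inte(\nu(D)))$ is canonically identified with $Y_e(\gamma) = (Y \setminus \inte(N(\gamma))) \cup \hat N$, where $\hat N := D \times \partial D^2$ is the ``dual'' solid torus whose meridian disks $D \times \{\mathrm{pt}\}$ correspond under the gluing to $e$-framed pushoffs of $\gamma$ on $\partial N(\gamma)$. In particular, since the endpoints of $I_0$ lie on $\partial D^2$, the disks $D_\pm$ are themselves meridian disks of $\hat N$ capping off $\gamma_\pm = \partial A$, so their union with $A$ forms a two-sphere $S := A \cup D_+ \cup D_- \subset Y_e(\gamma)$. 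By the irreducibility hypothesis~(3), $S$ bounds a three-ball $B \subset Y_e(\gamma) \subset W$.

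Finally, I will set $V := B_0 \cup B$, glued along the shared disks $D_+ \cup D_-$. A direct computation gives $\partial V = A' \cup A = T$, and gluing two three-balls along two disjoint disks on their boundaries yields a solid torus. A standard isotopy pushing the part of $\inte(V)$ that lies on $\partial W$ slightly into $\inte(W)$ along a collar of $Y$ (keeping $\partial V = T$ fixed) then produces a properly embedded solid torus, and the same construction works in both the smooth and topological categories. The main subtlety will be correctly identifying $\partial(W \setminus \inte(\nu(D)))$ with $Y_e(\gamma)$ so that the disks $D_\pm$ play the double role of co-cores of the thickened slice disk $B_0$ and meridian disks of the surgery solid torus $\hat N$---it is this identification that lets the two three-balls glue together into a solid torus bounding $T$.
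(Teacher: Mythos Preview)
Your proof is correct and follows essentially the same approach as the paper's. The paper phrases the construction in handle-theoretic language---building the solid torus relative to $T$ as a $3$-dimensional $2$-handle (your $B_0 = D \times I_0$, sitting inside the ambient $4$-dimensional $2$-handle $\nu(D)$) followed by a $3$-handle (your ball $B$ in $Y_e(\gamma)$)---whereas you describe the same two pieces explicitly as $3$-balls glued along the pair of disks $D_\pm$; the underlying geometry is identical.
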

The proof is identical in the smooth and topological categories, so we will omit those adjectives.  The difference in the outcome of the proof is due entirely to the smoothness of the embedded $2$-handle described in the proof. 
\begin{proof}
We view $S^1 \times D^2$ as having a handlebody decomposition relative to $T=\partial (S^1\times D^2$, with a single $2$-handle attached along $\gamma$, followed by a $3$-handle. The goal is to perform the necessary handle attachments ambiently in $W$. By assumption $(2)$, we can add a $4$-dimensional $2$-handle to $Y$ along $\gamma$, such that the trace of the surgery is embedded in $W$ and that this $2$-handle contains a $3$-dimensional $2$-handle added to $T$.  Since $[\gamma] \neq 0$, the torus $T$ has been surgered to yield a $2$-sphere $S \subset Y_e(\gamma)$.  By assumption $(3)$, the sphere $S$ bounds a ball in $Y_e(\gamma)$, and this ball then serves as the remaining $3$-handle in $S^1 \times D^2$.
\end{proof}

Lastly, we address the difference between the smooth and topological categories in the following result. 

\begin{theorem}\label{T:torus-smooth} There exists a contractible $W$ and a torus $T\subseteq Y$, where $T$ extends to a topological embedding of a solid torus in $W$, but not a smooth embedding. \end{theorem}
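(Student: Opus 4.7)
The plan is to adapt the construction of Theorem~\ref{T:nodehnfortori}, replacing the trefoils used there with knots that are topologically slice but not smoothly slice. Concretely, take $J=K$ to be the positive untwisted Whitehead double of the right-handed trefoil, so that $\Delta_J(t)=1$ while the Heegaard Floer invariant satisfies $\tau(J)=1$, and set $n=1$. The contractible smooth $4$-manifold $W$ and the torus $T\subseteq Y=\partial W$ are defined exactly as in Figure~\ref{F:generaltoruslooptheorem}. The incompressibility proof for $T$ from Theorem~\ref{T:nodehnfortori} goes through verbatim, since it only uses that $J$ and $K$ are nontrivial.

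To produce the topological solid torus, apply Proposition~\ref{P:embed} with $\gamma=\beta$. The class $[\beta]$ is primitive in $H_1(T)$; the $T$-pushoff framing is $e=0$ (computed in Theorem~\ref{T:nodehnfortori}); and irreducibility of $Y_0(\beta)$ may be checked by the same Gabai surgery-in-solid-tori and Bing--Martin arguments used for the incompressibility of $T$. The crucial step is topological sliceness of $\beta$ in $W$: after sliding, $\beta$ is isotopic in $Y$ to the curve $\beta'$ of Figure~\ref{F:betaprime}, which bounds a Seifert surface whose Seifert form agrees with that of $-J$ in $S^3$. Since $\Delta_{-J}(t)=\Delta_J(t)=1$, a version of Freedman's slice theorem applied in the simply-connected $W$---essentially an instance of the disk embedding theorem for trivial $\pi_1$---produces a topologically locally flat slice disk for $\beta$ in $W$ with the required framing.

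For the smooth obstruction, we retrace the logic of Theorem~\ref{T:nodehnfortori}: a smooth solid torus bounded by $T$ would force some essential $\gamma\subseteq T$ with zero self-linking in $Y$, necessarily of class $\pm[\beta]$ or $\pm([\alpha]+[\beta])$, to bound a smoothly embedded disk in $W$. The signature obstruction from Theorem~\ref{T:nodehnfortori} vanishes for our Whitehead doubles, so we replace it with the $\tau$-invariant, which obstructs smooth sliceness of knots in the boundary of a smooth integer homology ball. The trace of the surgery description of $Y$ is a smooth homology cobordism from $S^3$ to $Y$ through which the representatives $\beta'$ and $\gamma'$ are smoothly concordant to $-J$ and $K \# J_{1,1}=K\# J$ respectively (using the same band-sum picture as in the proof of Theorem~\ref{T:nodehnfortori}, now read as a smooth concordance rather than merely an algebraic one). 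By the invariance of $\tau$ under smooth homology concordance, $\tau(Y,\beta')=\tau(-J)=-1$ and $\tau(Y,\gamma')=\tau(K)+\tau(J)=2$, both nonzero, so no such curve bounds a smooth disk in $W$.

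The principal technical hurdle is establishing topological sliceness of $\beta$ in the nontrivial contractible manifold $W$ rather than in $B^4$. Although $W$ is simply connected, it is not $B^4$, so Freedman's classical slice theorem does not apply directly; one must run Freedman's infinite-process disk construction ambiently in $W$, invoking the disk embedding theorem for $\pi_1(W)=1$ together with the vanishing of the Alexander polynomial of the Seifert form of $\beta'$. A secondary (but routine) matter is the precise computation of $\tau$ of the relevant cables; the choice $n=1$ is made so that $J_{1,1}$ is simply $J$ as a knot type, making $\tau(K\# J_{n,1})$ transparent.
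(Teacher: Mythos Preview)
Your overall strategy---choosing topologically slice but smoothly non-slice knots and using a smooth concordance invariant in place of the signature---is natural, but the execution has two genuine gaps.

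First, the sentence ``the trace of the surgery description of $Y$ is a smooth homology cobordism from $S^3$ to $Y$'' is simply false: attaching $2$-handles to $S^3\times I$ along $L_1,L_2$ produces a cobordism with $b_2=2$.  The actual homology cobordism is $W\setminus B^4$, but there the product annulus $\beta'\times I$ is not obviously disjoint from the carved-out ribbon disk for $J\#-J$, since $\beta'$ is a parallel of the $-J$ summand of $L_1$.  More fundamentally, the band-sum picture in Theorem~\ref{T:nodehnfortori} produces a \emph{Seifert surface} for $\gamma'$ in $Y$ whose Seifert form agrees with that of $K\#J_{n,1}$; it does not produce a concordance in any $4$-manifold, let alone a homology cobordism to $S^3$.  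So the step ``read as a smooth concordance rather than merely an algebraic one'' is exactly the missing content, and without it you cannot invoke the homology-concordance invariance of $\tau$.

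Second, for Proposition~\ref{P:embed} you need $Y_e(\beta)$ irreducible.  The Gabai/Bing--Martin arguments cited from Theorem~\ref{T:nodehnfortori} establish incompressibility of the tori $T$ and $T'$ in $Y$; they say nothing about irreducibility of a further surgery on $\beta$.  The paper's proof handles this by taking $n=0$ and $\gamma=\alpha$, so that a slam-dunk identifies $Y_0(\alpha)$ with $0$-surgery on $K$ in $S^3$, which is irreducible by Gabai.  No analogous identification is available for $Y_0(\beta)$ with your parameters.

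For comparison, the paper avoids both issues by a different choice: $n=0$, $K=\operatorname{Wh}^+(T_{2,3})$, and $J$ the left-handed trefoil.  The topological solid torus comes from $\gamma=\alpha$ (whose Alexander polynomial is that of $K$, hence $1$) together with the slam-dunk irreducibility argument.  For the smooth obstruction, $\beta$ is ruled out by the ordinary signature of $-J$, and $\alpha$ is ruled out by exhibiting an explicit Stein handle diagram for $W$ and applying the slice--Bennequin inequality of Akbulut--Matveyev.  This last step is the paper's replacement for your $\tau$ argument, and it requires no concordance comparisons at all.
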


\begin{proof}Consider the manifold $W$ and torus $T$ used in Theorem~\ref{T:nodehnfortori} (see Figure~\ref{F:generaltoruslooptheorem}), with $K$ the positive Whitehead double of the right-handed trefoil, $J$ the left-handed trefoil, and $n=0$. We saw earlier that the curve $\alpha$ shown in Figure~\ref{F:generaltoruslooptheorem} is non-trivial in $H_1(T)$ and has zero self-linking in $Y$. In order to apply Proposition~\ref{P:embed}, we need to show that the manifold in Figure~\ref{F:irreducible} is irreducible and that $\alpha$ (with the zero framing) is slice in $W$. For the first, we perform a slam dunk move to see that our manifold in Figure~\ref{F:irreducible} is the 0--surgery on $K$, which is well-known to be irreducible~\cite[Corollary 5]{gabai:zerosurgery}. From the Seifert surface for $\alpha$ constructed in the proof of Theorem~\ref{T:nodehnfortori} we know that $\alpha$ has Alexander polynomial one, and thus is slice in $W$ by~\cite[Theorem 11.7B]{freedman-quinn}. Thus, $T$ bounds a topologically embedded solid torus in $W$. 

\begin{figure}[htb]
  \centering
    \labellist
    \normalsize\hair 0mm
    \pinlabel {$K$} at 188 140
    \pinlabel {$0$} at 130 101
    \pinlabel {$0$} at 162 152	
    \pinlabel {$0$} at 90 25
      \endlabellist
  \includegraphics[scale =0.8]{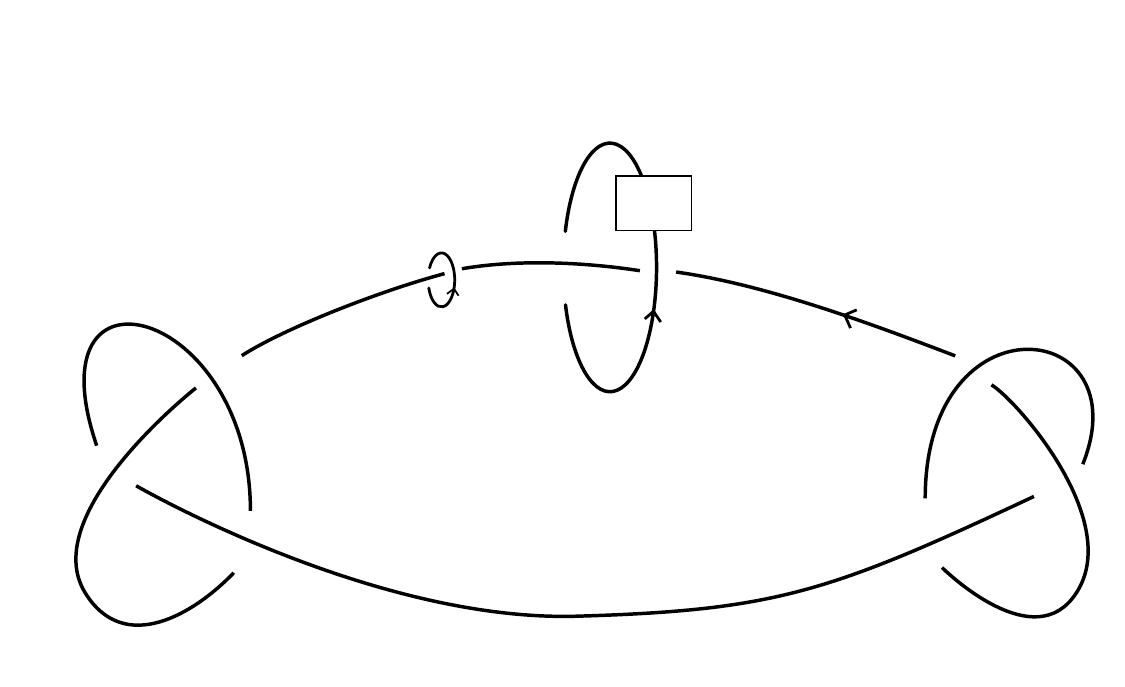}
  \caption{Here $K$ is the positive Whitehead double of the right-handed trefoil. }\label{F:irreducible}
\end{figure}

Next we show that $T$ cannot be the boundary of a smoothly embedded solid torus. As in Theorem~\ref{T:nodehnfortori}, using linking number considerations, we know that if such a smoothly embedded solid torus could be found, either $\alpha$ or $\beta$ (pictured in Figure~\ref{F:generaltoruslooptheorem}) would have to be smoothly slice. Recall that $\beta$ is isotopic to the curve $\beta'$ shown in Figure~\ref{F:betaprime}, and as we saw earlier, since $-J$ has non-zero signature (as $J$ is the left-handed trefoil), $\beta$ is not smoothly slice in $W$. Thus, it remains to show that $\alpha$ is not slice in $W$. The following is inspired by~\cite[Figures 4 and 6]{akbulut:infinite}. We start with a diagram for $W$ in our specific situation in Figure~\ref{F:nosmoothsolidtorus1} where $\alpha$ is drawn in red; we redraw $W$ using unknotted circles, decorated with dots, in Figure~\ref{F:nosmoothsolidtorus2}. 

\begin{figure}[hbt]
  \centering
    \labellist
    \normalsize\hair 0mm
    \pinlabel {$K$} at 188 140
    \pinlabel {$\alpha$} at 130 101
    \pinlabel {$0$} at 162 152	
    \pinlabel {$L_1$} at 90 25
    \pinlabel {$L_2$} at 102 125 
      \endlabellist
  \includegraphics[scale =0.8]{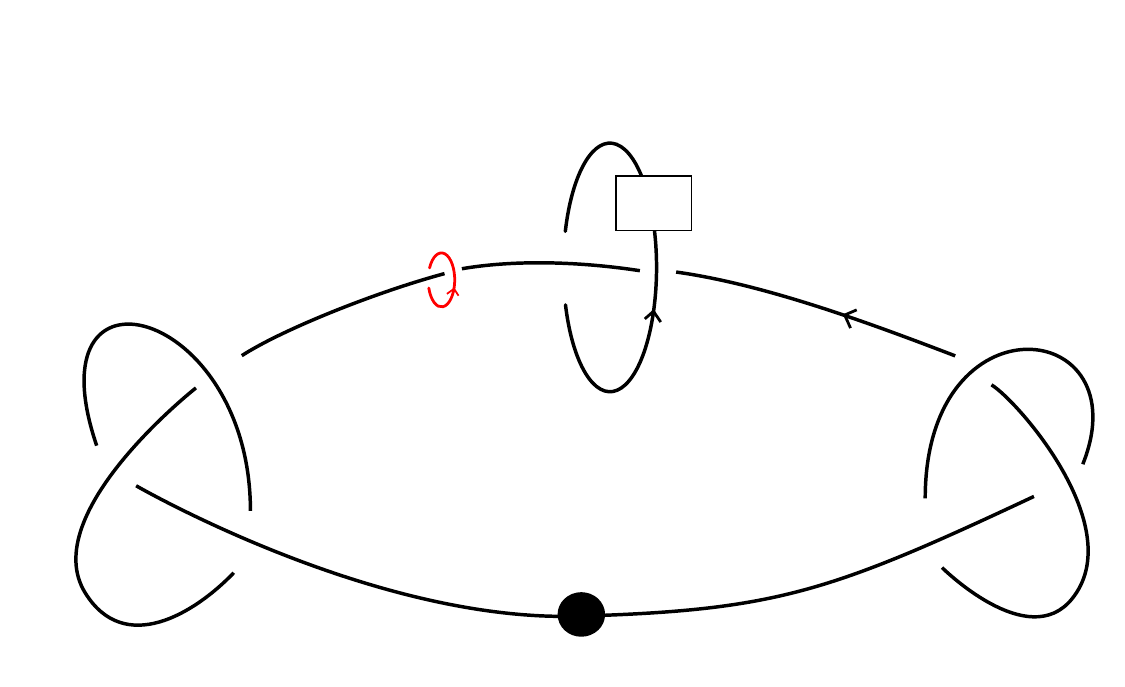}
  \caption{Here $K$ is the positive Whitehead double of the right-handed trefoil. }\label{F:nosmoothsolidtorus1}
\end{figure}
\begin{figure}[htb]
  \centering
    \labellist
    \normalsize\hair 0mm
    \pinlabel {$K$} at 299 102
    \pinlabel {$\alpha$} at 262 82
		\pinlabel {$0$} at 220 25
		\pinlabel {$0$} at 285 80		
      \endlabellist
  \includegraphics[scale = 1]{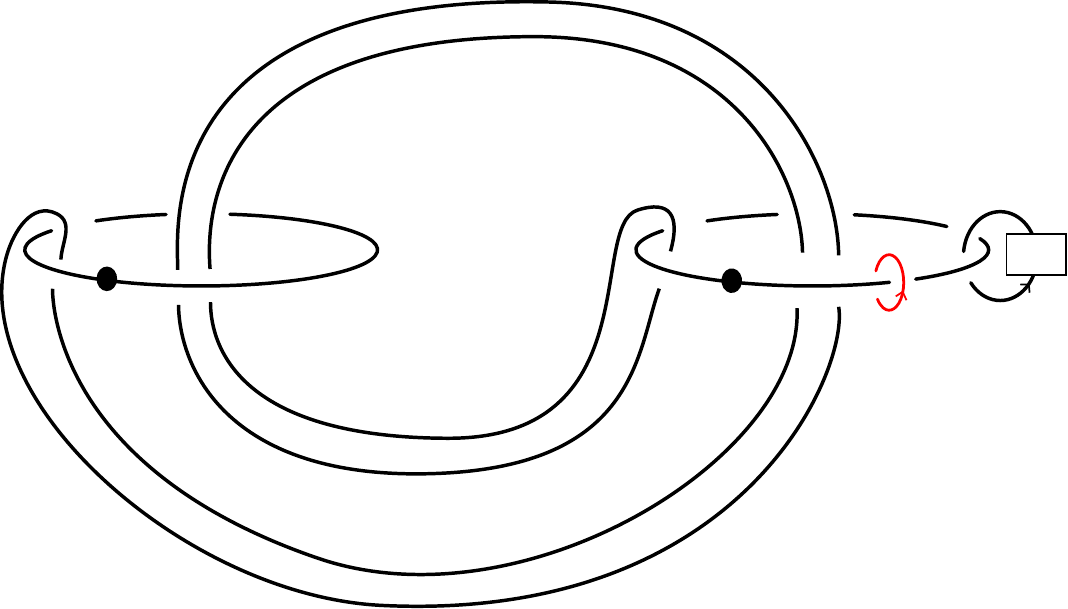}
  \caption{Here $K$ is the positive Whitehead double of the right-handed trefoil knot. }\label{F:nosmoothsolidtorus2}
\end{figure}

Next we switch to denoting $1$-handles using $2$-spheres in Figure~\ref{F:nosmoothsolidtorus3}. We rotate the bottom left $2$-sphere by $360^\circ$ along a horizontal axis, as described in~\cite[Figure 5.42]{GS99}, and perform an isotopy of $L_1$ to get Figure~\ref{F:nosmoothsolidtorus4} where the attaching circles of the $2$-handles are shown in Legendrian position. 
\begin{figure}[htb]
  \centering
    \labellist
    \normalsize\hair 0mm
    \pinlabel {$K$} at 90 140
    \pinlabel {$\alpha$} at 150 160
		\pinlabel {$0$} at 50 75
		\pinlabel {$0$} at 120 132		
      \endlabellist
  \includegraphics[scale=1]{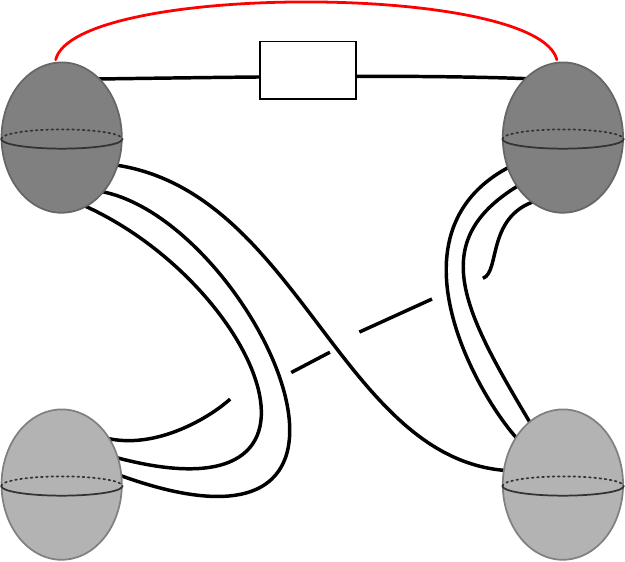}
  \caption{The 2--spheres are identified in pairs horizontally as indicated, by a reflection across a vertical plane separating each pair.}\label{F:nosmoothsolidtorus3}
\end{figure}

\begin{figure}[htb]
  \centering
    \labellist
    \normalsize\hair 0mm
    \pinlabel {$\alpha$} at 80 135
		\pinlabel {$\mathcal{L}_1$} at 95 60
		\pinlabel {$\mathcal{L}_2$} at 120 250			
		\pinlabel {$-1$} at 42 77
		\pinlabel {$0$} at 100 155	
		\endlabellist
  \includegraphics[scale=1]{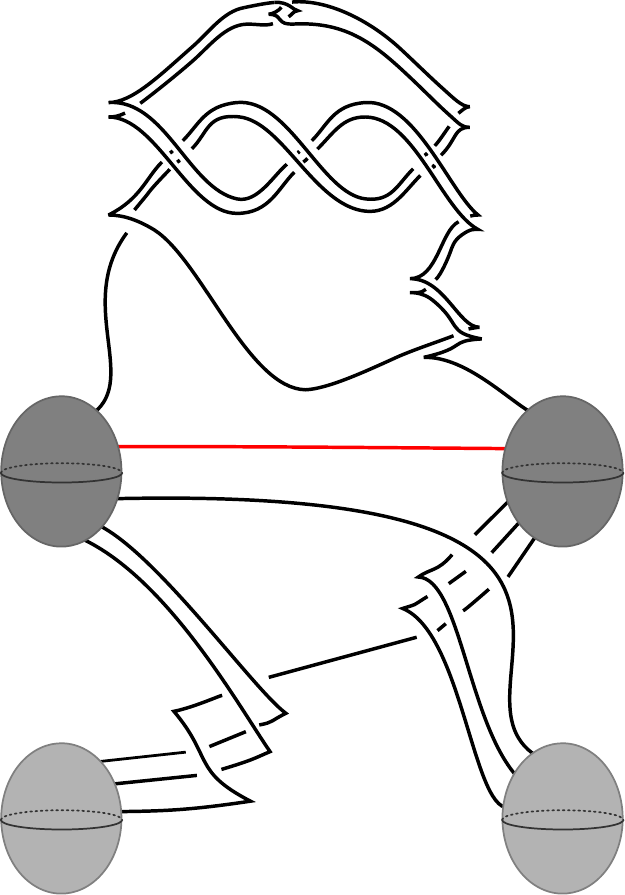}
  \caption{}\label{F:nosmoothsolidtorus4}
\end{figure}

We compute $\tb(\mathcal{L}_1)=0$, and thus the framing of $\mathcal{L}_1$, which is $-1$, is equal to $\tb(\mathcal{L}_1)-1$. Similarly, we compute $\tb(\mathcal{L}_2)=1$, and thus the framing of $\mathcal{L}_2$, which is $0$, is $\tb(\mathcal{L}_2)-1$. Thus, the manifold $W$ admits a Stein structure since it satisfies the condition given in~\cite{Eliashberg90,Gompf98}. We compute from our diagram that $\tb(\alpha)=\rot(\alpha)=0$. Since $W$ is Stein, the slice genus of $\alpha$ in $W$, denoted $g_4^W(\alpha)$, must satisfy the slice--Bennequin inequality of~\cite{AM97}. In particular, $0\leq 2g_4^W(\alpha)-1$, i.e.\ $1\leq g_4^W(\alpha)$ as needed. \end{proof}

%%%%%%%%%%%%%%%%%%%%%%%%%%%%%%%%%%%%%%%%%%%%%%%%%%%%
\section{Extending diffeomorphisms and extending embedded tori}\label{S:diff}
One of our original motivations for investigating tori in the boundary of $4$-manifolds was a relationship with the question of extending certain diffeomorphisms of the boundary over the $4$-manifold.  Suppose one is given a (co-oriented) torus $T \subset Y$, and an integral homology class $\ba \in H_1(T)$. One may define a diffeomorphism $f_{\ba}: Y \to Y$ that is the identity outside of a neighborhood $T \times [0,1]$ as follows. Regard $T$ as $\R^2/\Z^2$, with $[z]$ denoting the equivalence class of the vector $z$. Then we view $\ba$ as a vector in $\Z^2$, and write $f_{\ba}$ on the product neighborhood as
\begin{equation}\label{E:twist}
f_{\ba}([z],t) = ([z+t\ba ],t).
\end{equation}
This diffeomorphism is usually called a Dehn twist on $Y$ with slope $\ba$ along $T$.

The connection to our work is the observation that if $Y = \partial W$ so that $T$ bounds a smooth solid torus in $W$, then $f_{\ba}$ extends to a diffeomorphism of $W$.  A formula for the extension, which might be called a Dehn twist on $W$ along the solid torus, is given in~\cite[\S 3]{akbulut-ruberman:absolute}. Gompf~\cite{gompf:infinite-cork,gompf:infinite-cork-handlebody} has recently constructed an infinite order cork, i.e.\ a contractible $4$-manifold $C$ with a diffeomorphism $f: \partial C \to \partial C$ such that no power of $f$ extends to a diffeomorphism of $C$. Indeed, cutting out $C$ from a particular $4$-manifold, and re-gluing via powers $f^k$ produces distinct $4$-manifolds. According to general principles from $3$-manifold topology, $\partial C$ contains an incompressible torus $T$, and by construction, $f$ is a Dehn twist along this torus. (The idea of looking at such twists in this context was introduced in~\cite{akbulut:infinite}.)  It follows directly that $T$ does not bound a solid torus in $C$. Gompf's family of infinite corks are of the form given in Figure~\ref{F:gompfcork} (see~\cite[Figure~5]{gompf:infinite-cork-handlebody}), with $J$ in an infinite family of double twist knots and $n\neq 0$, and the function $f$ is a Dehn twist along the longitude of the torus $T_0$. 
%In~\cite{gompf:infinite-cork}, he asks whether the manifold pictured is a cork for all other choices of the knot $J$ with respect to a Dehn twist along the torus $T_0$. We will show in a moment that the answer is no. 

\begin{figure}[htb]
  \centering
    \labellist
    \normalsize\hair 0mm
    \pinlabel {$J$} at 185 55
    \pinlabel {$-J$} at 20 55
    \pinlabel {$n$} at 123 110
    \pinlabel {$T_0$} at 30 105   
      \endlabellist
  \includegraphics{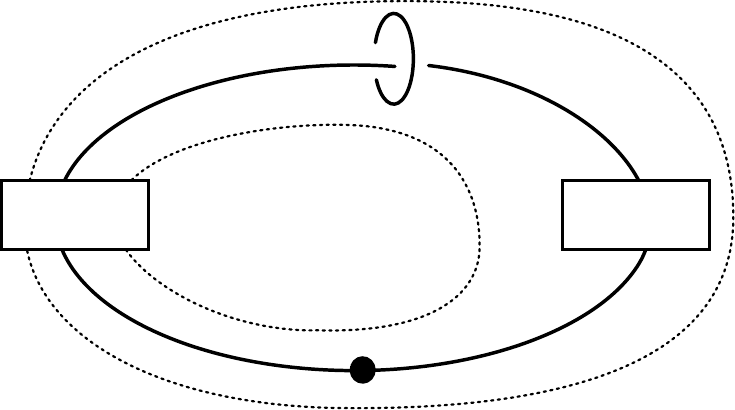}
 \caption{Gompf's infinite corks.}\label{F:gompfcork}
 \end{figure}

In the proof of Theorem~\ref{T:nodehnfortori}, we saw that $T_0$ is isotopic, in the $3$-manifold boundary, to the torus $T$ shown in Figure~\ref{F:generaltoruslooptheorem} and thus, the methods of Section~\ref{S:tori} give a different proof that, for many choices of $J$, the torus $T_0$ does not bound an embedded solid torus; for example, by the proof of Theorem~\ref{T:nodehnfortori}, this holds if neither of $J$ and $J_{n,1}$ is algebraically slice. Gompf's family of double twist knots contains infinitely many twist knots with this property. 

One might wonder whether there is a sort of converse to the observation above, namely, if $T \subset Y = \partial W$ and $f_\ba$ extends over $W$ for all $\ba \in H_1(T)$, does $T$ bound a solid torus in $W$? Combining our results with a construction from~\cite{gompf:infinite-cork-handlebody}, we show that this converse does not hold. First we need a standard observation.

\begin{lemma}\label{L:seifert}
Let $N$ be a $3$-manifold with boundary $T$ with an $S^1$ action that preserves $T$ and restricts to a free action on $T$. Let $\ba$ be the homology class of an orbit on $T$. Then the Dehn twist $f_\ba$ is isotopic to $\id_N$ rel boundary, i.e. there is an isotopy to $\id_N$ that is the identity on $T$. 
\end{lemma}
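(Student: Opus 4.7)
The key idea is that a Dehn twist along a fiber of an $S^1$-action is trivial up to isotopy, because the loop $s \mapsto \phi_s$ in $\diff(N)$ can be localized to a collar using a cutoff. First I would invoke the equivariant collar neighborhood theorem: since the $S^1$-action is free in a neighborhood of $T$, there is an $S^1$-invariant collar $T \times [0,1] \subset N$ of $\partial N$ on which the action is given in coordinates by $\phi_s([z],t) = ([z+s\ba],t)$. In particular, on this collar, $f_\ba([z],t) = \phi_t([z],t)$.

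Next I would pick a smooth cutoff $\tau: N \to [0,1]$ equal to $0$ on $\partial N$ and equal to $1$ outside the collar, monotone in $t$ on the collar, and consider the family
$$
F_s(x) = \phi_{s\tau(x)}(x), \qquad s \in [0,1].
$$
Each $F_s$ is a diffeomorphism of $N$: on the collar it is $([z],t) \mapsto ([z+s\tau(t)\ba],t)$, off the collar it equals $\phi_s$, and the two formulas agree smoothly on the overlap because $\tau \equiv 1$ there. Moreover $F_0 = \id_N$ and $F_s|_{\partial N} = \id$ for all $s$ (since $\tau$ vanishes on $\partial N$), while $F_1$ agrees with $f_\ba$ on a neighborhood of $\partial N$ up to a monotone reparametrization of $t$, and equals $\phi_1 = \id_N$ elsewhere.

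To finish I would chain on a standard isotopy rel $\partial N$ between the reparametrized twist $F_1$ and $f_\ba$, for instance $H_r([z],t) = ([z + ((1-r)t + r\tau(t))\ba], t)$ on the collar and the identity elsewhere; at $t=0$ and $t=1$ this reduces to the identity, so $H_r$ extends smoothly across the collar boundary and fixes $\partial N$ pointwise. Concatenating the two isotopies yields the required isotopy from $f_\ba$ to $\id_N$ rel $\partial N$. The main point to justify carefully is the first step: the existence of the $S^1$-equivariant collar on which $\phi$ takes the linear normal form above. This follows from the equivariant collar neighborhood theorem applied to the free part of the action, but it is the only nontrivial ingredient; the rest of the argument is formal.
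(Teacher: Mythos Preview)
Your proof is correct and follows essentially the same approach as the paper: both use the global $S^1$-action to extend the collar-supported homotopy defining $f_\ba$ to an ambient isotopy of $N$, checking that the two pieces patch at the inner edge of the collar because the action there is by $\ba$-translation. The only cosmetic difference is that the paper scales by a function $\beta(s)$ of the isotopy parameter (so that at $s=1$ one lands directly on $f_\ba$), whereas you use a spatial cutoff $\tau$ and then need your extra step $H_r$ to fix the reparametrization; the paper's choice avoids that second isotopy but the content is the same.
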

\begin{proof}
Regard $S^1$ as $\R/\Z$, and write the action of $t\in S^1$ on a point $x$ of $N$ as $t\cdot x$. On the boundary, since the action is free and $\ba$ is an orbit, this can be written as $t \cdot [z] = [z + t\ba]$ as in the definition of $f_\ba$ (here, as before, we regard $T$ as $\R^2/\Z^2$, with $[z]$ denoting the equivalence class of the vector $z$). Parameterize a neighborhood of the boundary as $T \times [0,1]$, where the boundary corresponds to $T \times \{0\}$; by an isotopy, we may assume that the circle action preserves the second factor.  Choose a small $\epsilon$ and a smooth function $\beta(s): [0,1] \to [0,1]$ such that 
$$
\beta(s) = 0\ \text{for}\ s <\epsilon\ \text{and} \ \beta(s) = 1\ \text{for}\ s > 1-\epsilon.
$$
Then an isotopy $F_{\ba,s}$, $s \in [0,1]$ is defined on $T \times [0,1]$ by the formula
$$
F_{\ba,s}([z],t) = ([z + \beta(s) t\ba],t) 
$$
and on $N - T \times [0,1]$ by $F_{\ba,s}(x) = \beta(s) \cdot x$.  For $t=0$, this is the identity for all $s$. For $t=1$, this just gives the circle action, and hence the formulas patch together to give the desired isotopy from $F_{\ba,0}=\id_N$ to $F_{\ba,1}=f_\ba$. 
\end{proof}

\begin{theorem}\label{T:extension} There exists a contractible $W$ and a torus $T\subseteq Y$, where for all $\ba \in H_1(T)$, the diffeomorphism $f_\ba$ extends to a diffeomorphism of $W$, but where $T$ does not extend to a smooth embedding of $S^1 \times D^2$. 
\end{theorem}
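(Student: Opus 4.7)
The plan is to select a contractible $W$ and an incompressible torus $T\subseteq Y=\partial W$ from the family in Theorem~\ref{T:nodehnfortori} (see Figure~\ref{F:generaltoruslooptheorem}), with $K$, $J$, and $n$ chosen so that (i) the sliceness obstructions used in the proof of Theorem~\ref{T:nodehnfortori} rule out any smoothly embedded solid torus in $W$ bounded by $T$, and (ii) the torus $T$ separates $Y$ into two pieces $V_1, V_2$ each admitting a smooth $S^1$-action that restricts to a free action on $T$, with orbit classes $\bb_1, \bb_2 \in H_1(T)$ forming a $\Z$-basis. The construction from \cite{gompf:infinite-cork-handlebody} will be used to exhibit the Seifert fibered structures on $V_1$ and $V_2$ explicitly.

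The argument then proceeds in two reductions. First, a direct computation from the formula \eqref{E:twist} gives $f_{\ba_1+\ba_2}=f_{\ba_1}\circ f_{\ba_2}$, so the set of classes $\ba\in H_1(T)$ for which $f_\ba$ extends to a diffeomorphism of $W$ is a subgroup; hence it suffices to extend $f_{\bb_1}$ and $f_{\bb_2}$. Second, choosing the support of $f_{\bb_i}$ to lie inside $V_i$ and applying Lemma~\ref{L:seifert} gives an isotopy rel $T$ from $\id_{V_i}$ to $f_{\bb_i}$; extending by the constant isotopy on the complementary side produces an isotopy in $\diff(Y)$ from $\id_Y$ to $f_{\bb_i}$. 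A standard collar argument then upgrades such an isotopy to a smooth extension of $f_{\bb_i}$ over $W$: fix a collar $Y\times[0,1]\hookrightarrow W$ and interpolate along the isotopy between $f_{\bb_i}$ on the boundary and $\id$ at the inner face of the collar, glued to the identity on the complement. Composing extensions of these two basis Dehn twists then handles arbitrary $\ba\in H_1(T)$.

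The main technical obstacle is condition (ii): exhibiting explicit Seifert fibrations on both sides of $T$ in $Y$ whose regular-fiber slopes form a $\Z$-basis of $H_1(T)$. I would work this out by tracing through the handle-calculus and slam-dunk isotopies used in the proof of Theorem~\ref{T:nodehnfortori} with a concrete choice such as $K$ and $J$ both torus knots (so that their $S^3$-complements are Seifert fibered over orbifold disks), reading off each regular-fiber class as a specific integer combination $x[\alpha]+y[\beta]$ via linking-number computations in $Y$ of the type already carried out in Theorem~\ref{T:nodehnfortori}, and verifying that the resulting $2\times 2$ integer matrix is unimodular. Once this unimodularity is established the remaining steps are routine, and the failure of $T$ to bound a smoothly embedded solid torus follows directly from the algebraic slice obstructions on $\beta$ and $[\alpha]+n[\beta]$ given in Theorem~\ref{T:nodehnfortori}.
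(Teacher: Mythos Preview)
Your architecture matches the paper's in its broad strokes: both reduce via $f_{\ba+\bb}=f_\ba\circ f_\bb$ to extending two independent Dehn twists, both invoke Lemma~\ref{L:seifert} on a Seifert-fibered piece adjacent to $T$, and both appeal to the algebraic-slice obstructions of Theorem~\ref{T:nodehnfortori} to rule out a bounding solid torus. The essential divergence is in how the second basis twist is handled. The paper takes $J$ a nontrivial $(p,q)$ torus knot, $K$ the unknot, and $n=-1$; then the outer piece $N$ is diffeomorphic (via a Rolfsen twist) to the torus-knot exterior $E_J$, and Lemma~\ref{L:seifert} makes $f_{\beta+(pq-1)\alpha}$ isotopic to the identity. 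For the remaining basis direction the paper does \emph{not} use a second Seifert structure at all: it invokes Gompf's extension result \cite[Corollary~3.3]{gompf:infinite-cork-handlebody} to extend $f_\alpha$ directly over the $4$-manifold $W$. Since $\alpha$ and $\beta+(pq-1)\alpha$ manifestly form a $\Z$-basis of $H_1(T)$, this finishes the proof.

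Your plan instead asks for Seifert structures on \emph{both} sides of $T$, so that both basis twists are already isotopic to the identity on $Y$ and Gompf's $4$-dimensional input is never needed. This would be more elementary if it worked, but the step you flag as ``the main technical obstacle'' is a genuine gap rather than routine bookkeeping. Even granting that both complementary pieces can be made Seifert-fibered (which, for your suggested choice of $K$ a nontrivial torus knot, already forfeits the paper's clean Rolfsen-twist identification $N\cong E_J$ and forces you to analyse a more complicated surgered piece), the unimodularity of the $2\times2$ matrix of fiber slopes is a numerical coincidence that must be verified, not assumed: two primitive classes on $T$ form a $\Z$-basis only when their algebraic intersection is $\pm1$, and there is no structural reason the regular fibers arising from the $J$-side and the $-J$-side should meet this condition. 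In the paper's own example one can trace the inner piece $V$ to a copy of $E_{-J}$, but the boundary identification passes through the $0$-surgery on $L_1$, so the fiber slope on $T$ from that side is not the naive $-pq\alpha+\beta$; computing it and checking determinant $\pm1$ against $(pq-1)\alpha+\beta$ is exactly the delicate point your proposal leaves open. The paper's recourse to Gompf's theorem is precisely what circumvents this issue.
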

\begin{proof}
Note first  that for any $\ba,\ \bb \in H_1(T)$, the corresponding twists satisfy
\begin{equation}\label{E:hom}
f_{\ba + \bb} = f_\ba \circ f_\bb.
\end{equation}
This follows directly from \eqref{E:twist}.  In particular, to prove that every twist along a torus $T$ extends over a fixed manifold $W$, it suffices to show that $f_\ba$ and $f_\bb$ extend, where $\ba$ and $\bb$ form a basis for $H_1(T)$.

We construct a family of examples as in the proof of Theorem~\ref{T:nodehnfortori}. Consider the torus shown in Figure~\ref{F:generaltoruslooptheorem}, with $J$ a non-trivial $(p,q)$ torus knot, $K$ the unknot, and $n= -1$. Let $E$ be the exterior of the knot $J$.  There is a standard circle action on $E$, coming from the circle action on the $3$-sphere given in complex coordinates by  $e^{i \theta} \cdot(z_1,z_2) = (e^{pi\theta}z_1,e^{qi\theta}z_2)$. It is easy to see that $J$ is invariant, and that the orbit of a point on the peripheral torus is given by $\lambda + pq \mu$, where $\lambda$ and $\mu$ are the longitude and meridian of $J$.

Let $N$ be the outer component of $Y -\nhd(T)$; it is given by surgery on $E$ along $K$. In fact (compare~\cite[Chapter 9I]{rolfsen:knots}), $N$ is diffeomorphic to $E$, via a diffeomorphism that is a Dehn twist along the boundary. Under this diffeomorphism, the curve $\alpha$ in Figure~\ref{F:generaltoruslooptheorem} goes to $\mu$, while $\beta$ goes to $\mu + \lambda$. It follows from Lemma~\ref{L:seifert} that the diffeomorphism $f_{\beta+(pq-1)\mu}$ is isotopic to the identity. On the other hand, again using the isotopy of tori from the proof of Theorem~\ref{T:nodehnfortori}, we know from~\cite[Corollary 3.3]{gompf:infinite-cork-handlebody} that $f_\alpha$ extends over $W$.  Since $\alpha$ and $\beta + (pq-1)\mu$ are linearly independent, \eqref{E:hom} implies that $f_\ba$ extends over $W$ for all $\ba \in H_1(T)$.

The fact that $T$ does not bound a solid torus is shown by the calculation in the proof of Theorem~\ref{T:nodehnfortori}. As argued at the end of that proof, if $T$ did bound a solid torus, then both $-J$ and $K \conn J_{-1,1}$ would have to be slice. Since $K$ is the unknot, $K\conn J_{-1,1}$ is simply the reverse of $J$. The result follows since non-trivial torus knots are never algebraically slice. 
\end{proof}

As we mentioned earlier, Gompf's family of infinite corks is of the form given in Figure~\ref{F:gompfcork}, with $J$ in an infinite family of double twist knots and $n\neq 0$, with respect to a function given by the Dehn twist along the longitude of the torus shown in the figure. In~\cite[Question 1.6]{gompf:infinite-cork}, Gompf asks whether \textit{any} knot $J$, with the longitudinal twist, would yield an infinite cork. This question was also raised in~\cite{gompf:infinite-cork-handlebody}. The above theorem shows that the answer is no. Indeed the theorem shows that there exists infinitely many choices for $J$ for which  no Dehn twist along a curve on the torus gives the above $4$-manifold the structure of a cork. Our method of proof does not generalize to knots other than torus knots, since it is known that the complement of a knot is Seifert fibered if and only if it is a torus knot~\cite{moser:seifert-fibered-complement}.

%\bibliography{dehn}
%\bibliographystyle{amsplain}
\providecommand{\bysame}{\leavevmode\hbox to3em{\hrulefill}\thinspace}

\end{document}